\theoremstyle{plain}
\newtheorem{theorem}{Theorem}[section]
\newtheorem{problem}[theorem]{Problem}
\newtheorem{lemma}[theorem]{Lemma}
\theoremstyle{definition}
\newtheorem{definition}{Definition}[section]
\newtheorem{assumption}{Assumption}[section]
\newcommand{\epss}{\varepsilon}
\newcommand{\la}{\langle}
\newcommand{\ra}{\rangle}
\renewcommand{\theequation}{\thesection.\arabic{equation}}
\makeatletter\@addtoreset{equation}{section} \makeatother
\begin{document}

\title{  Optimal Control with State Constraints for Stochastic  Evolution Equation with Jumps
in Hilbert Space
 \thanks{This work was supported by the Natural Science Foundation of Zhejiang Province
for Distinguished Young Scholar  (No.LR15A010001),  and the National Natural
Science Foundation of China (No.11471079, 11301177) }}

\date{}

   \author{ Qingxin Meng\thanks{Corresponding author.   E-mail: mqx@zjhu.edu.cn}  \hspace{1cm}
   Qiuhong Shi
 \hspace{1cm}  Maoning Tang
\hspace{1cm}
\\\small{Department of Mathematics, Huzhou University, Zhejiang 313000, China}}

\maketitle
\begin{abstract}

This paper studies a stochastic
optimal control problem
with state constraint, where the state equation is described
by a controlled stochastic  evolution equation with jumps
in Hilbert Space
and the control domain is assumed to be convex. By means of Ekland variational
principle,  combining  the convex variation method
and the duality technique, necessary conditions
for optimality are derived in the form of stochastic maximum principles.
\end{abstract}

\textbf{Keywords}:  Stochastic evolution equation;Backward stochastic evolution equation
Stochastic maximum principle; State constraint.

\maketitle

\section{Introduction}

In this paper, we study the optimal control for the following stochastic
evolution equation with jumps
\begin{eqnarray} \label{eq:1.1}
  \left\{
  \begin{aligned}
   d X (t) = & \ [ A (t) X (t) + b ( t, X (t), u(t)) ] d t
+ [B(t)X(t)+g( t, X (t), u(t)) ]d W(t)
 \\&\quad +\int_E \sigma (t, e,X(t-),u(t))\tilde \mu(de,dt),  \\
X (0) = & \  x , \quad t \in [ 0, T ]{\color{blue},}
  \end{aligned}
  \right.
\end{eqnarray}
with the cost functional
\begin{equation}\label{eq:1.2}
J(u(\cdot))= {\mathbb E} \bigg [ \int_0^T l ( t, X (t), u (t) ) d t
+ \Phi ( X (T) ) \bigg ],
\end{equation}
and  state constraint
\begin{equation} \label{eq:3.3}
{\mathbb E} [ \phi(X(T)) ] =0 ,
\end{equation}
in the framework of a Gelfand triple $V \subset H= H^*\subset V^{*},$ where $ H$ and  $V$ are
two given Hilbert spaces. Here on  a given  filtrated probability space  $(\Omega, \mathscr{F},\{
{\mathscr F}_t\}_{0\leq t\leq T}, P),$ $W$  is  a  one-dimensional  Brownian motion  and  $\tilde \mu$
is a Poisson random martingale measure on a fixed nonempty Borel measurable
subset ${E}$ of $\mathbb R^1,$
$A:[0,T]\times \Omega \longrightarrow {\mathscr L} (V, V^*)$, $B
  : [0,T]\times \Omega \longrightarrow {\mathscr L} (V, H ),$ $b:[0,T]\times  \Omega\times H
   \times  U_{ad}\longrightarrow H$, $g:[0,T]\times\Omega\times H
   \times  U_{ad} \longrightarrow H$ and $\sigma:[0,T]\times  \Omega \times
   E\times H  \times  U_{ad}\longrightarrow H$  are given random mappings, where the control variable $u$ takes value in a nonempty convex subset $U_{ad}$ of a real Hilbert space $U$. Here we denote by  $\mathscr{L}(V,V^*)$ the space of bounded
linear transformations of V into $V^*$, by ${\mathscr L} (V, H)$   the space of bounded
linear transformations of  $H$ into $V.$
    An adapted solution of
   \eqref{eq:1.1} is  a $V$-valued, $\{{\mathscr F}_t\}_{0\leq t\leq T}$-adapted process $X(\cdot)$  which satisfies \eqref{eq:1.1} under some appropriate sense.
   The optimal control problem is to find an admissible control to minimize the cost functional  \eqref{eq:1.2}
 over  the set of admissible controls.

One of the basic method to solve  stochastic optimal control
problems is the stochastic maximum principle whose objective is to
establish necessary (as well as sufficient) optimality conditions of
controls. For  optimal control problems of infinite dimensional
stochastic systems, many works are concerned with the
stochastic systems and the corresponding stochastic maximum
principles, see e.g.( \cite{HuPe901, Ben830, Zhou93,  Alhu10,
Alhu101, Alhu111, Alhu112, LuZh12,Guat11, DuMeng}.

 In contrast, there have not been
  a
 number of   results on  the optimal control for stochastic partial
differential equations driven by jump processes. In 2005,  {\O}ksendal, Proske,  Zhang \cite{Ok} studied the optimal control problem  of quasilinear semielliptic SPDEs driven by Poisson random measure   and gave sufficient maximum principle results, not necessary ones.  In 2017,  Tang and Meng \cite{Tangmeng2016}  studied the optimal control problem
for a controlled  stochastic evolution equation \eqref{eq:1.1} with the
cost functional \eqref{eq:1.2}, where  the  control domain is assumed to be  convex.  \cite{Tangmeng2016} adopt the  convex variation method and the first adjoint duality analysis to show a necessary maximum principle.  And  Under the convexity assumption of the Hamiltonian and the terminal cost, a sufficient maximum principle for this optimal problem which is the so-called verification theorem is  obtained

     The purpose of this paper is to  establish the
 maximum principle
 for the optimal control problem where the state process is driven by a  controlled stochastic evolution equation \eqref{eq:1.1} with the cost
 functional \eqref{eq:1.2} and the
 state constraint \eqref{eq:3.3} by Ekland variational
principle,  combining  the convex variation method
and the duality technique.

The paper is organized as follows. In section 2 we formulate the problem and give
various assumptions used throughout the paper. In section 3, we present a penalized optimal control problem. Section 4 is  devoted to derive necessary  optimality
conditions in the form of stochastic maximum principles in a unified
way. Some basic results on the SEE and the BSEE with jump are given in the Appendix which will been
used in this paper.
\section{Problem formulation}

In this section, we introduce basic notation and standing assumptions, and state an optimal control
problem with state constraint under a  stochastic evolution equation with jumps
in Hilbert space, which was considered by Tang and Meng\cite{Tangmeng2016}.

Let $(\Omega, \mathscr{F}, \mathbb P)$ be a complete probability space
equipped with a  one-dimensional standard Brownian motion $\{W(t),
0\leq t\leq T\}$  and  a stationary Poisson point process
$\{\eta_t\}_{t\geq 0}$ defined  on a fixed nonempty Borel measurable
subset ${E}$ of $\mathbb R^1$.
 Denote by $\mathbb E[\cdot]$ the expectation
under the probability $\mathbb P.$
 We denote by $\mu(de,dt)$
 the counting measure induced by $\{\eta_t\}_{t\geq 0}$ and
  by  $\nu(de)$ the corresponding
 characteristic measure.  Then the compensatory
 random martingale measure is denoted by
  $\tilde{\mu}(de, dt):={\mu}(de,
dt)-\nu(de)dt$ which is assumed to be independent of the Brownian
motion  $\{W(t),
0\leq t\leq T\}$.
  Furthermore, we assume that
$\nu({E})<\infty$. Let $\{{\mathscr F}_t\}_{0\leq t\leq T}$ be the
P-augmentation of the natural filtration generated by
$\{{W_t}\}_{t\geq 0}$ and $\{\eta_t\}_{t\geq 0}$.
 By  $\mathscr{P}$  we denote the
predictable $\sigma$ field on $\Omega\times [0, T]$ and
by $\mathscr B(\Lambda)$
  the Borel $\sigma$-algebra of any topological space $\Lambda.$
  Let $X$ be  a  separable Hilbert space with norm $\|\cdot\|_X$.
  Denote by  $M^{\nu,2}( E; X)$ the set of all $X$-valued measurable
  functions $r=\{r(e), e\in E\}$ defined on the measure
  space $(E, \mathscr B(E); v)$
  such that
$\|r\|_{M^{\nu,2}( E; X)}\triangleq
\sqrt{{\int_E\|r(e)\|_X^2v(de)}}<~\infty,$ by
${M}_\mathscr{F}^{\nu,2}{([0,T]\times  E; X)}$ the  set of all
$\mathscr{P}\times {\mathscr B}(E)$-measurable $X$-valued processes
$r=\{r(t,\omega,e),\
(t,\omega,e)\in[0,T]\times\Omega\times E\}$ such that
$\|r\|_{{M}_\mathscr{F}^{\nu,2}{([0,T]\times  E; X)}}\triangleq
\sqrt{{\mathbb E\bigg[\displaystyle\int_0^T\displaystyle\int_E\displaystyle\|r(t,e)\|_X^2
\nu(de)dt\bigg]}}<~\infty,$
 by
$M_{\mathscr{F}}^2(0,T;X)$ the set of all ${\mathscr{F}}_t$-adapted
$X$-valued  processes $f=\{f(t,\omega),\
(t,\omega)\in[0,T]\times\Omega\}$ such that
$\|f\|_{M_{\mathscr{F}}^2(0,T;X)}
\triangleq\sqrt{\mathbb E\bigg[\displaystyle\int_0^T\|f(t)\|_X^2dt\bigg]}<\infty,$
by  $S_{\mathscr{F}}^2(0,T;X)$  the set of all
${\mathscr{F}}_t$-adapted  $X$-valued c\`{a}dl\`{a}g processes
$f=\{f(t,\omega),\ (t,\omega)\in[0,T]\times\Omega\}$ such that
$\|f\|_{S_{\mathscr{F}}^2(0,T;X)}\triangleq\sqrt{
\mathbb E\bigg[\displaystyle\sup_{0
\leq t \leq T}\|f(t)\|_X^2}\bigg]<+\infty,$  by
$L^2(\Omega,{\mathscr{F}},\mathbb P;X)$ the set of all $X$-valued random
variables $\xi$ on $(\Omega,{\mathscr{F}},
\mathbb P)$ such that
$\|\xi\|_{L^2(\Omega,{\mathscr{F}},
\mathbb P};X)\triangleq
\sqrt{\mathbb E[\|\xi\|_X^2]}<\infty.$
Throughout this paper, we let  $C$ and $K$  be two generic positive constants, which may be different from line to line.

In what follows, we set up a Gelfand triple $(V, H, V^*)$, based on which the state process and the adjoint process is defined.
Indeed, the state process is governed by a SEE with jumps, while the adjoint process is governed by a BSEE with jumps. We provide the existence, uniqueness
and continuous dependence theorems for SEEs with jumps  and BSEEs with jumps in the appendix.

Let $V$ and $H$ be two separable (real) Hilbert spaces such that $V$
is densely embedded in $H$. We identify $H$ with its dual space by the Riesz mapping. Then we can take $H$ as a pivot space and get a Gelfand triple $(V, H,
V^*)$ such that $V \subset H = H^*\subset V^{*}$. Let $(\cdot,\cdot)_H$ denote the inner product in $H$, and $\la\cdot,\cdot\ra$ denote the duality product between
$V$ and $V^{*}$. Moreover, we write $\mathscr{L}(V,V^*)$ for the space of bounded linear transformations of V into $V^*$.

The state process is governed by the following controlled SEE with jumps in the Gelfand triple $(V, H, V^*)$:
\begin{eqnarray} \label{eq:4.1}
  \left\{
  \begin{aligned}
   d X (t) = & \ [ A (t) X (t) + b ( t, X (t), u(t)) ] d t
+ [B(t)X(t)+g( t, X (t), u(t)) ]d W(t)
 \\&\quad +\int_E \sigma (t, e,X(t-),u(t))\tilde \mu(de,dt),  \\
X (0) = & \  x , \quad t \in [ 0, T ]{\color{blue},}
  \end{aligned}
  \right.
\end{eqnarray}
where the space of controls $U_{ad}$ is given by a nonempty closed convex subset of a separable real Hilbert space $U$.

\begin{definition}
A stochastic process  $u(\cdot)$ is an admissible control, if $u(t)\in U_{ad}$ for almost $t\in [0, T]$ and $ u(\cdot)\in
M_{\mathbb F}^2(0, T;U)$. The set of all admissible controls is denoted by ${\cal A}$.
\end{definition}

The cost functional is given by
\begin{equation}\label{eq:2.2}
J(u(\cdot))= {\mathbb E} \bigg [ \int_0^T l ( t, x (t), u (t) ) d t
+ \Phi ( x (T) ) \bigg ].
\end{equation}
We assume that the control system (\ref{eq:4.1})-(\ref{eq:2.2}) is subject to the following state constraint
\begin{equation} \label{eq:2.555}
{\mathbb E} [ \phi(X(T)) ] =0 .
\end{equation}
Here the coefficients $(A,B, b,g,\sigma, l,\Phi, \phi)$ of the control system (\ref{eq:4.1})-(\ref{eq:3.3})
\begin{assumption}\label{ass:2.5}
\begin{enumerate}
\item[]
\item[(i)]

The operator processes $A:[0,T]\times \Omega \longrightarrow {\mathscr L} (V, V^*)$ and $B
  : [0,T]\times \Omega \longrightarrow {\mathscr L} (V, H)$
  are weakly predictable; i.e.,
  $ \langle A(\cdot)x, y \rangle$ and $(B(\cdot)x, y)_H$
  are both predictable process for every $x, y\in V, $
  and satisfy the coercive condition, i.e.,  there exist
  some constants  $ C, \alpha>0$ and $\lambda$ such that for any $x\in V$ and  each $(t,\omega)\in [0,T]\times \Omega,$
    \begin{eqnarray}
    \begin{split}
     - \langle A(t)x, x \rangle +\lambda ||x||_H^2\geq \alpha
      ||x||_V^2+||Bx||_H^2,
    \end{split}
  \end{eqnarray}
  and  \begin{eqnarray} \label{eq:3.3}
\sup_{( t, \omega ) \in [0, T] \times \Omega} \| A ( t,\omega ) \|_{{\mathscr L} ( V, V^* )}
 +\sup_{( t, \omega ) \in [0, T] \times \Omega} \| B ( t,\omega ) \|_{{\mathscr L} ( V, H )} \leq C \ .
\end{eqnarray}

\item[(ii)]$b, g: [ 0, T ] \times \Omega \times H \times {\mathscr U} \rightarrow H$  are $\mathscr P\times
   \mathscr B(H)\times \mathscr B(\mathscr U)/\mathscr B(H) $ measurable
   mappings and $\sigma:[0,T]\times  \Omega \times
   E\times H \times \mathscr U \longrightarrow H$ is a $\mathscr P\times \mathscr B(E)\times
   \mathscr B(H)\times \mathscr B(U)/\mathscr B(H)$-measurable mapping  such that $b ( \cdot, 0, 0 ), g ( \cdot, 0, 0 ) \in {
M}^2_{\mathscr F} ( 0, T; H ), \sigma(\cdot, \cdot, 0,0)\in {M}_\mathscr{F}^{\nu,2}{([0,T]\times  E; H)}.$ Moreover, for almost all $( t, \omega, e ) \in [ 0, T ] \times \Omega \times E$,  $b$, $g$ and $\sigma$ are  G\^ateaux differentiable in $(x,u)$ with  continuous bounded G\^ateaux  derivatives
$b_x, g_x,\sigma_x,  b_u, g_u$ and  $\sigma_u$;
\item[(iii)]
$l:[ 0, T ] \times \Omega \times H \times {\mathscr U} \rightarrow \mathbb R $ is a
 ${\mathscr P} \otimes {\mathscr B} (H) \otimes {\mathscr B} ({\mathscr U})/ {\mathscr B} ({\mathbb R})$-measurable mapping
  and  $\Phi,\phi: \Omega \times H \rightarrow {\mathbb R}$
is  a ${\mathscr F}_T\otimes {\mathscr B} (H) / {\mathscr B} ({\mathbb R})$-measurable
mapping.
For almost all $( t, \omega ) \in [ 0, T ] \times \Omega$, $l$ is continuous G\^ateaux
differentiable in $(x,u)$
with continuous  G\^ateaux derivatives $l_x$ and $l_u$, and $\Phi$ and $\phi$
 are G\^ateaux differentiable in $x$
with  continuous
 G\^ateaux derivative $\Phi_x$ and
 $\phi_x$.
Moreover, for almost all $( t, \omega ) \in [ 0, T ] \times \Omega$,   there exists a   constant $C > 0$ such that  for all $( x, u ) \in H  \times {\mathscr U}$
\begin{eqnarray*}
| l ( t, x, u  ) |
\leq  C ( 1 + \| x \|^2_H + + \| u \|_U^2 ) ,
\end{eqnarray*}
\begin{eqnarray*}
&& \| l_x ( t, x,u) \|_H +
+ \| l_u ( t, x, u ) \|_U \leq C ( 1 + \| x \|_H  + \| u \|_U  ) ,
\end{eqnarray*}
and
\begin{eqnarray*}
& | \Phi (x) | \leq C ( 1 + \| x \|^2_H) , | \phi (x) | \leq C ( 1 + \| x \|^2_H)  \\
& \| \Phi_x (x) \|_H \leq C ( 1 + \| x \|_H),\| \phi_x (x) \|_H \leq C ( 1 + \| x \|_H).
\end{eqnarray*}
\end{enumerate}
\end{assumption}

Under Assumption \ref{ass:2.5}, it can be shown from Lemma \ref{thm:3.1} that for any $u(\cdot)\in {\cal A},$
the state equation \eqref{eq:4.1} admits a unique solution $X(\cdot)\in M_{\mathscr{F}}^2(0,T;V)\bigcap S_{\mathscr{F}}^2(0,T;H)$. We also denote this solution as
$X^u(\cdot)$ whenever we want to emphasis its dependence on the control $u(\cdot)$. Then we call
$X(\cdot)$ the state process corresponding to the control process $u(\cdot)$ and $(u(\cdot); X(\cdot))$
the admissible pair. Furthermore, from  Assumption \ref{ass:2.5} and the a priori estimate \eqref{eq:3.4}, we can easily validate that
\begin{eqnarray*}
|J(u(\cdot))|<\infty.
\end{eqnarray*}

Now we state formally the optimal control problem
\begin{problem}\label{pro:2.1}
Find an admissible control $\bar{u}(\cdot)$ such that
\begin{eqnarray*}\label{eq:b7}
J(\bar{u}(\cdot))=\inf_{u(\cdot)\in {\cal A}}J(u(\cdot)),
\end{eqnarray*}
subject to \eqref{eq:4.1} and \eqref{eq:2.555}, where the cost functional is given by \eqref{eq:2.2}.
\end{problem}

Any $\bar{u}(\cdot)\in {\cal A}$ satisfying the above is called an optimal control process of Problem \ref{pro:2.1};
the corresponding state process $\bar{X}(\cdot)$ is called an optimal state process; correspondingly,
$(\bar{u}(\cdot); \bar{X}(\cdot))$ is called an optimal pair of Problem \ref{pro:2.1}.

\section{Penalized optimal control problem}

In this section, we relate the original constrained control problem with one without state constraint.

The results relies on the following Ekeland's principle.

\begin{lemma}[Ekeland's principle, \cite{ekeland1974variational}]
Let $(S,d)$ be a complete metric space and $\rho (\cdot ):S
\rightarrow {\mathbb R}$ be lower-semicontinuous and bounded from
below. For $\varepsilon \geq 0$, suppose $u^{\varepsilon }\in S$
satisfies
\begin{equation*}
\rho ( u^\varepsilon ) \leq \inf_{u\in S} \rho (u) + \varepsilon .
\end{equation*}
Then for any $\lambda >0$, there exists $u^\lambda \in S$ such that
\begin{eqnarray*}
\rho ( u^\lambda ) \leq \rho ( u^\varepsilon ) , \quad d (
u^\lambda, u^\varepsilon  ) \leq \lambda ,
\end{eqnarray*}
and
\begin{eqnarray*}
\rho ( u^\lambda ) \leq \rho (u) + \frac{\varepsilon}{\lambda} d
(u^\lambda, u) , {\mbox { for all }} u \in S.
\end{eqnarray*}
\end{lemma}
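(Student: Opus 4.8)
The plan is to give the classical proof of Ekeland's principle via the partial order induced by $\rho$, since this is a self-contained metric-space argument that uses none of the stochastic structure of the paper. Assume $\varepsilon>0$ (the case $\varepsilon=0$ is trivial: then $u^\varepsilon$ already minimizes $\rho$ and one takes $u^\lambda=u^\varepsilon$). Set $\gamma:=\varepsilon/\lambda>0$ and define a relation $\preceq$ on $S$ by declaring $u\preceq v$ whenever $\rho(u)+\gamma\,d(u,v)\le\rho(v)$. First I would check that $\preceq$ is a genuine partial order: reflexivity is immediate; antisymmetry follows by adding the two defining inequalities for $u\preceq v$ and $v\preceq u$ and using $d\ge 0$, $\gamma>0$; and transitivity follows from the triangle inequality for $d$ together with the additivity of the $\rho$-terms. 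These verifications are routine.

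Next I would build, starting from $u_0:=u^\varepsilon$, a $\preceq$-descending sequence by approximate minimization. Given $u_n$, let $S_n:=\{v\in S:\ v\preceq u_n\}$; this set is nonempty (it contains $u_n$) and $\rho$ is bounded below on it, so I can choose $u_{n+1}\in S_n$ with $\rho(u_{n+1})\le \inf_{v\in S_n}\rho(v)+2^{-(n+1)}$. By construction $u_{n+1}\preceq u_n$, hence $\gamma\,d(u_{n+1},u_n)\le \rho(u_n)-\rho(u_{n+1})$. The numbers $\rho(u_n)$ form a nonincreasing sequence bounded below, so telescoping this last inequality shows $\sum_n d(u_{n+1},u_n)<\infty$; therefore $(u_n)$ is Cauchy, and by completeness of $(S,d)$ it converges to some $u^\lambda\in S$.

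Then I would show $u^\lambda$ is the sought point. Passing to the limit $m\to\infty$ in $u_m\preceq u_n$ (valid for $m\ge n$ by transitivity) and using the lower semicontinuity of $\rho$ together with the continuity of $d$, I obtain $u^\lambda\preceq u_n$ for every $n$; in particular $u^\lambda\preceq u_0=u^\varepsilon$, which is exactly $\rho(u^\lambda)+\gamma\,d(u^\lambda,u^\varepsilon)\le\rho(u^\varepsilon)$. This single inequality yields the first two conclusions at once: it gives $\rho(u^\lambda)\le\rho(u^\varepsilon)$, and since $\rho(u^\varepsilon)-\rho(u^\lambda)\le \rho(u^\varepsilon)-\inf_S\rho\le\varepsilon$, it gives $d(u^\lambda,u^\varepsilon)\le \varepsilon/\gamma=\lambda$. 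For the variational inequality I would argue that $u^\lambda$ is $\preceq$-minimal: if $w\preceq u^\lambda$, then $w\preceq u_n$ for all $n$ by transitivity, so $w\in S_n$ and $\rho(u_{n+1})\le \rho(w)+2^{-(n+1)}$; letting $n\to\infty$ gives $\rho(u^\lambda)\le\rho(w)$, and combined with $w\preceq u^\lambda$ this forces $d(w,u^\lambda)=0$, i.e. $w=u^\lambda$. Hence for every $u\ne u^\lambda$ one has $u\not\preceq u^\lambda$, that is $\rho(u^\lambda)<\rho(u)+\gamma\,d(u,u^\lambda)$, which is the asserted inequality with $\gamma=\varepsilon/\lambda$ (even slightly stronger than the stated $\le$).

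The main obstacle is the convergence step: one must choose the approximate minimizers $u_{n+1}$ so that both the Cauchy property and the limiting minimality survive, and then carefully combine the lower semicontinuity of $\rho$ with the continuity of $d$ when passing to the limit in the order relation. Once the partial order is set up correctly, this is the only delicate point; the remaining manipulations are purely algebraic rearrangements of the defining inequality of $\preceq$.
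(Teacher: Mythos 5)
Your proof is correct: the partial order $u\preceq v\iff\rho(u)+\tfrac{\varepsilon}{\lambda}d(u,v)\le\rho(v)$ is well defined, the approximate-minimizer sequence is Cauchy by the telescoping bound, and the passage to the limit via lower semicontinuity of $\rho$ and continuity of $d$ delivers all three conclusions (including the correct handling of the trivial case $\varepsilon=0$). Note, however, that the paper offers no proof of this lemma at all --- it is quoted verbatim with a citation to Ekeland's 1974 article --- so there is nothing to compare against; your argument is essentially the classical one from that source.
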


 Define
a metric $d$ on  the admissible
 controls set $\cal A$ as
\begin{eqnarray}\label{eq:3.13}
d (u_1 (\cdot), u_2 (\cdot)) \triangleq \bigg \{ {\mathbb E} \bigg [ \int_0^T||u_1 (t) - u_2 (t)||^2_U dt\bigg] \bigg \}^{\frac{1}{2}} ,
\quad \forall u_1(\cdot), u_2 (\cdot) \in \cal A.
\end{eqnarray}

We can assume   that
$\cal A$ is a bounded closed convex set in the sense of \eqref{eq:3.13},
the unbounded case can be reduced to the
bounded case.

Under this assumption of boundedness and closedness
of $\cal A$,
we have the following basic
lemma which will be used in the sequence.

\begin{lemma}\label{lem:4.2}
$(\Lambda, d)$ is a complete metric space.
\end{lemma}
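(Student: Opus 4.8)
The plan is to realise $d$ as the metric induced by the norm of a genuine Hilbert space and then reduce completeness of the admissible set to its closedness in that space; here I write $\Lambda=\mathcal{A}$ for the admissible control set on which $d$ is defined in \eqref{eq:3.13}. Concretely, by definition $d(u_1,u_2)=\|u_1-u_2\|_{M_{\mathscr F}^2(0,T;U)}$, so $d$ is exactly the distance induced by the norm of $\mathcal{M}:=M_{\mathscr F}^2(0,T;U)$. Since $U$ is a separable Hilbert space, $\mathcal{M}$ is itself a Hilbert space, hence a complete metric space. Because $\mathcal{A}\subset\mathcal{M}$, the subspace $(\mathcal{A},d)$ is complete if and only if $\mathcal{A}$ is a closed subset of $\mathcal{M}$, so the lemma is equivalent to the closedness of $\mathcal{A}$.

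To establish closedness, I would take an arbitrary $d$-Cauchy sequence $\{u_n\}_{n\geq 1}\subset\mathcal{A}$. By completeness of $\mathcal{M}$ there is a limit $u\in\mathcal{M}$ with $d(u_n,u)\to 0$, and it remains only to check that $u$ inherits the two defining properties of an admissible control, namely $\mathbb{F}$-adaptedness and the pointwise constraint $u(t)\in U_{ad}$ for a.e. $t$. Adaptedness is automatic: the adapted (predictable) $U$-valued processes form a closed linear subspace of $L^2([0,T]\times\Omega;U)$, so the $\mathcal{M}$-limit of adapted processes is again adapted.

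The only genuine point is the constraint $u(t)\in U_{ad}$ a.e. Since $L^2$-convergence does not by itself give pointwise convergence, I would observe that $d(u_n,u)\to 0$ means $u_n\to u$ in $L^2([0,T]\times\Omega;U)$, hence in measure on $[0,T]\times\Omega$ with respect to $dt\otimes d\mathbb{P}$; extracting a subsequence $\{u_{n_k}\}$ one obtains $u_{n_k}(t,\omega)\to u(t,\omega)$ in $U$ for a.e. $(t,\omega)$. As each $u_{n_k}(t,\omega)\in U_{ad}$ and $U_{ad}$ is closed in $U$, the limit satisfies $u(t,\omega)\in U_{ad}$ for a.e. $(t,\omega)$, whence $u\in\mathcal{A}$. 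This shows $\mathcal{A}$ is closed in $\mathcal{M}$, and therefore $(\mathcal{A},d)$ is complete. I expect this subsequence extraction to be the only delicate step, since the membership $u(t)\in U_{ad}$ is a pointwise condition that cannot be read off the $L^2$-limit directly and must be transferred through an a.e.-convergent subsequence before invoking the closedness of $U_{ad}$; if instead one takes the standing hypothesis that $\mathcal{A}$ is closed in the sense of \eqref{eq:3.13} at face value, the argument collapses to the remark that a closed subset of the complete space $\mathcal{M}$ is itself complete.
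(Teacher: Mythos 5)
Your proposal is correct and follows essentially the same route as the paper: identify $d$ with the norm metric of the Hilbert space $M_{\mathscr F}^2(0,T;U)$ and conclude completeness of $\mathcal{A}$ from its closedness in that space, which the paper takes as a standing assumption stated just before the lemma. Your additional verification that $\mathcal{A}$ is in fact closed (adaptedness passes to $L^2$-limits, and the pointwise constraint $u(t)\in U_{ad}$ is transferred through an a.e.-convergent subsequence using closedness of $U_{ad}$) is a detail the paper omits, and it is carried out correctly.
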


\begin{proof}
Since the control space $U$ is a Hilbert space $M_{\mathbb{F}}^2(0,T;U)$ is also a Hilbert space under \eqref{eq:3.13}. Therefore, $\cal A$ is complete under the distance defined by \eqref{eq:3.13}.
since $\cal A$ is a  closed subset of $M_{\mathbb{F}}^2(0,T;U).$
The proof is complete.
\end{proof}

The next lemma shows that a mapping from the control process in $\cal A$ to the state process in ${\cal M}^2_{\mathbb F} (0, T)$, to be defined below, is bounded and continuous.
To simplify our notation, we write
\begin{eqnarray}
{\cal M}^2_{\mathscr F} (0, T) \triangleq S_ {\mathscr{F}}^2(0,T;H) \cap M_{\mathscr{F}}^2(0,T;V)
\end{eqnarray}
and
\begin{eqnarray}
|| X(\cdot)||_{{\cal M}^2_{\mathscr F} (0, T)} \triangleq \sqrt{||X(\cdot)||^2_{S^2_{\mathscr F}(0,T;H)}
+ ||X(\cdot)||^2_{M^2_{\mathscr F}(0,T;V)} }.
\end{eqnarray}
The next lemma shows that a mapping from the control process in ${\cal A}$ to the state process in $ M_{\mathscr{F}}^2(0,T;V)$ is bounded and continuous.

\begin{lemma}\label{lem:4.7}
Let Assumption \ref{ass:2.5} be satisfied. Then the mapping ${\cal I}: ({\cal A}, d) \rightarrow ({\cal M}_{\mathscr{F}}^2(0,T), || \cdot ||_{{\cal M}_{\mathscr{F}}^2(0,T)})$
defined by
\begin{eqnarray*}
{\cal I} (u (\cdot)) = X^u(\cdot)
\end{eqnarray*}
is bounded and continuous.
\end{lemma}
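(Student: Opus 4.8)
The plan is to prove boundedness and continuity separately, and in both cases to reduce the statement to the a priori estimates for the state equation \eqref{eq:4.1} that are collected in the appendix (Lemma \ref{thm:3.1} and the estimate \eqref{eq:3.4}). For boundedness, I would take an arbitrary admissible control $u(\cdot) \in {\cal A}$ and its associated state process $X^u(\cdot) = {\cal I}(u(\cdot))$. Since ${\cal A}$ is assumed bounded in the metric $d$, there is a constant $R > 0$ with $d(u(\cdot), 0) \leq R$, i.e. $\|u(\cdot)\|_{M^2_{\mathscr F}(0,T;U)} \leq R$ uniformly over ${\cal A}$. Applying the a priori estimate \eqref{eq:3.4} to \eqref{eq:4.1}, together with the growth conditions on $b, g, \sigma$ coming from the boundedness of their G\^ateaux derivatives in Assumption \ref{ass:2.5}(ii), yields a bound of the form
\begin{eqnarray*}
\| X^u(\cdot) \|^2_{{\cal M}^2_{\mathscr F}(0,T)} \leq C \Big( 1 + \|x\|_H^2 + \|u(\cdot)\|^2_{M^2_{\mathscr F}(0,T;U)} \Big) \leq C ( 1 + \|x\|_H^2 + R^2 ),
\end{eqnarray*}
which is independent of the particular $u(\cdot)$. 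This establishes that ${\cal I}$ maps ${\cal A}$ into a bounded subset of ${\cal M}^2_{\mathscr F}(0,T)$.

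For continuity, I would fix $u(\cdot) \in {\cal A}$ and take a sequence $u_n(\cdot) \to u(\cdot)$ in $({\cal A}, d)$, i.e. $d(u_n, u) \to 0$. Writing $X_n = X^{u_n}$ and $X = X^u$, the difference $\xi_n := X_n - X$ satisfies a stochastic evolution equation of the same structural type, driven by the differences of the coefficients evaluated along the two trajectories. The natural strategy is to apply the continuous-dependence part of the appendix theorem to this difference equation and show $\|\xi_n\|_{{\cal M}^2_{\mathscr F}(0,T)} \to 0$. The forcing terms involve quantities such as $b(t, X_n, u_n) - b(t, X, u)$, which I would split as
\begin{eqnarray*}
b(t, X_n, u_n) - b(t, X, u) = \big[ b(t, X_n, u_n) - b(t, X, u_n) \big] + \big[ b(t, X, u_n) - b(t, X, u) \big],
\end{eqnarray*}
and similarly for $g$ and $\sigma$. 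The first bracket is controlled by the boundedness of $b_x$ (hence a Lipschitz bound $\|b(t, X_n, u_n) - b(t, X, u_n)\|_H \leq C \|\xi_n\|_H$), which feeds back into the estimate on $\xi_n$ and is absorbed by Gronwall; the second bracket is the genuine driving perturbation.

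The main obstacle I expect is handling the second bracket, $b(t, X, u_n) - b(t, X, u)$, and its analogues for $g$ and $\sigma$: I must show these tend to zero in the appropriate $M^2_{\mathscr F}$ and $M^{\nu,2}_{\mathscr F}$ norms. Here I would use that $d(u_n, u) \to 0$ means $u_n \to u$ in $M^2_{\mathscr F}(0,T;U)$, so along a subsequence $u_n(t,\omega) \to u(t,\omega)$ for almost every $(t,\omega)$; combined with the continuity of $b, g, \sigma$ in $u$ (Gâteaux differentiability with bounded derivatives gives both continuity and a Lipschitz-in-$u$ bound) and a dominated convergence argument, using the boundedness of $b_u, g_u, \sigma_u$ to produce a dominating integrable majorant, the perturbation terms converge to zero. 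Since the limit is the same for every subsequence, the whole sequence converges, giving $\|\xi_n\|_{{\cal M}^2_{\mathscr F}(0,T)} \to 0$ and hence continuity of ${\cal I}$. A technical point worth care is that the Lipschitz-in-$u$ estimate alone would only yield continuity directly if the derivatives were globally bounded in the strong operator sense; the dominated convergence route is the cleaner way to close the argument.
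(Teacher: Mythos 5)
Your proposal is correct and follows the same skeleton as the paper: the a priori estimate for the SEE gives boundedness (using that ${\cal A}$ is bounded in $d$), and the continuous dependence estimate gives continuity. The one place you diverge is in closing the continuity argument. The paper does not split the coefficient difference by hand: it invokes the continuous dependence estimate \eqref{eq:3.5} of Lemma \ref{thm:3.2}, which already absorbs the $x$-dependent part (your ``first bracket'' and the Gronwall step are internal to that lemma) and leaves only the perturbation $b(t,X^{v}(t),v_n(t))-b(t,X^{v}(t),v(t))$ evaluated along the single trajectory $X^v$. That term the paper bounds directly by $C\|v_n(t)-v(t)\|_U$ via the uniform boundedness of $b_u,g_u,\sigma_u$, yielding the quantitative estimate $\|X^{v_n}-X^{v}\|^2_{{\cal M}^2_{\mathscr F}(0,T)}\le K\,d^2(v_n,v)$, i.e.\ ${\cal I}$ is in fact Lipschitz. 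Your alternative --- a.e.\ convergence along a subsequence plus dominated convergence --- also works and is more robust (it would survive if the coefficients were merely continuous in $u$ with linear growth), but it is more roundabout here and only gives qualitative continuity; the hedge about whether the derivative bound holds ``in the strong operator sense'' is unnecessary, since Assumption \ref{ass:2.5}(ii) is used throughout the paper precisely as a uniform operator bound, and the resulting Lipschitz estimate is needed later anyway (e.g.\ the $O(\rho^2)$ rate in Lemma \ref{lem:3.2}). So you should prefer the direct Lipschitz route and drop the subsequence/DCT machinery.
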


\begin{proof}
By the a priori estimate of SEE (Lemma \ref{thm:3.2}), it can be shown that for any $u(\cdot)\in \Lambda$,
\begin{eqnarray}\label{eq:5.7}
|| X^u(\cdot) ||^2_{{\cal M}_{\mathscr{F}}^2(0,T)}
&\leq& K \bigg \{ {\mathbb E} [ ||x||^2_H ] + {\mathbb E} \bigg [ \int_0^T|| u(t)|| _{U}^{2}dt \bigg ] + 1 \bigg \} \nonumber \\
&\leq& K.
\end{eqnarray}
Here $K$ is a positive constant independent of $u(\cdot)$ and may change from line to line.

On the other hand, let $\{v_n(\cdot)\}_{n \geq 1}$ be a sequence in $\cal A$ such that it  converges an admissible  $v(\cdot)\in \cal A $ under the metric $d$. Suppose that $X_n(\cdot)$, for each $n = 1, 2, \cdots$,
and $X(\cdot)$ are the state processes corresponding to $v_n(\cdot)$ and $v(\cdot)$, respectively. By making use of the a priori estimate of SEE (Lemma \ref{thm:3.2}), we can deduce that
\begin{eqnarray}\label{eq5.9}
&&|| X^{v_n}(\cdot) - X^v(\cdot)||^2_{{\cal M}_{\mathscr{F}}^2(0,T)}
\\&\leq& K \bigg\{{\mathbb E} \bigg [\int_{0}^{T}||b(t,X^{v}(t), v_n(t))-b(t,X^{ v}(t), v(t)  )||^2_H dt
 \bigg]+
 {\mathbb E} \bigg [\int_{0}^{T}||g(t,X^{v}(t), v_n(t))-g(t,X^{ v}(t), v(t)  )||^2_H dt
 \bigg]\nonumber
 \\&&\quad\quad+
 {\mathbb E} \bigg [\int_{0}^{T}||\sigma(t,X^{v}(t), v_n(t))-\sigma (t,X^{ v}(t), v(t)  )||^2_{M^{\nu,2}( E; H)} dt
 \bigg]\bigg\}
  \nonumber \\
&\leq& K {\mathbb E} \bigg [\int_{0}^{T}||v_n(t)- v(t)||^2_U dt\bigg] \nonumber \\
&=& K d^2(v_n(\cdot), v(\cdot)).
\end{eqnarray}
Sending $n\rightarrow\infty$ in (\ref{eq5.9}) yields
\begin{eqnarray}\label{eq:5.11}
|| X^{v_n}(\cdot) - X^v(\cdot)||^2_{{\cal M}_{\mathscr{F}}^2(0,T)} \rightarrow 0 .
\end{eqnarray}
This validates the continuity of ${\cal I}$.
\end{proof}

\begin{lemma} \label{lem:4.4}
Let Assumption \ref{ass:2.5} be satisfied. Then the cost functional $J(u(\cdot))$ is bounded and continuous on $\cal A$ under the metric \eqref{eq:3.13}.
\end{lemma}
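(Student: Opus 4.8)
The plan is to treat the two summands of $J$ separately, exploiting the a priori bound $\|X^u(\cdot)\|^2_{{\cal M}^2_{\mathscr F}(0,T)} \leq K$ from Lemma \ref{lem:4.7} together with the polynomial growth conditions in Assumption \ref{ass:2.5}(iii). For boundedness I would invoke the estimates $|l(t,x,u)| \leq C(1 + \|x\|_H^2 + \|u\|_U^2)$ and $|\Phi(x)| \leq C(1 + \|x\|_H^2)$; taking expectation in the definition of $J$ and using $\mathbb E\int_0^T \|X^u(t)\|_H^2 dt \leq T \|X^u(\cdot)\|^2_{S^2_{\mathscr F}(0,T;H)}$ and $\mathbb E\|X^u(T)\|_H^2 \leq \|X^u(\cdot)\|^2_{S^2_{\mathscr F}(0,T;H)}$, one bounds $|J(u(\cdot))|$ by a constant multiple of $1 + \|X^u(\cdot)\|^2_{{\cal M}^2_{\mathscr F}(0,T)} + \mathbb E\int_0^T \|u(t)\|_U^2 dt$. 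Since $\cal A$ is bounded in the metric \eqref{eq:3.13}, the last term is uniformly bounded, and Lemma \ref{lem:4.7} controls the middle term, so $\sup_{u(\cdot) \in \cal A} |J(u(\cdot))| < \infty$.

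For continuity, let $v_n(\cdot) \to v(\cdot)$ in $(\cal A, d)$ and abbreviate $X_n = X^{v_n}$, $X = X^v$. I would express each increment through the mean value theorem along the segment joining the two arguments: for the terminal cost, $\Phi(X_n(T)) - \Phi(X(T)) = \int_0^1 (\Phi_x(X(T) + \theta(X_n(T) - X(T))), X_n(T) - X(T))_H\, d\theta$, and analogously for $l$ in the joint variable $(x,u)$ using both $l_x$ and $l_u$ (the fundamental theorem of calculus applying since the Gâteaux derivatives are continuous). Bounding these derivatives by $\|\Phi_x(x)\|_H \leq C(1 + \|x\|_H)$ and $\|l_x\|_H + \|l_u\|_U \leq C(1 + \|x\|_H + \|u\|_U)$, and applying the Cauchy--Schwarz inequality on $\Omega \times [0,T]$, I would dominate $\mathbb E|\Phi(X_n(T)) - \Phi(X(T))|$ and $\mathbb E\int_0^T |l(t, X_n, v_n) - l(t, X, v)|\, dt$ each by a product of two factors.

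The first factor involves the $L^2$-norms of quantities like $1 + \|X\|_H + \|X_n\|_H$ (and, for $l$, also $\|v\|_U + \|v_n\|_U$), which remain bounded by the a priori estimate and the boundedness of $\cal A$. The second factor is $(\mathbb E\|X_n(T) - X(T)\|_H^2)^{1/2}$ for the $\Phi$ term and $(\mathbb E\int_0^T (\|X_n - X\|_H^2 + \|v_n - v\|_U^2)\, dt)^{1/2}$ for the $l$ term. By Lemma \ref{lem:4.7} we have $\|X_n - X\|^2_{{\cal M}^2_{\mathscr F}(0,T)} \to 0$, hence $\mathbb E\sup_t \|X_n(t) - X(t)\|_H^2 \to 0$, which annihilates the state contributions, while $\mathbb E\int_0^T \|v_n - v\|_U^2\, dt = d^2(v_n, v) \to 0$ annihilates the control contribution. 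Therefore $J(v_n) \to J(v)$, establishing continuity.

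The routine parts are the growth-and-Cauchy--Schwarz bookkeeping; the one point demanding care is the $l$ term, where the limit must combine the state convergence delivered by Lemma \ref{lem:4.7} with the control convergence built into the metric $d$. The key is to retain both $\|X_n - X\|_H$ and $\|v_n - v\|_U$ inside a single mean value expansion, rather than estimating the two variables independently, so that the linear growth of $l_x$ and $l_u$ keeps the leading factor uniformly bounded and both small factors are driven to zero simultaneously.
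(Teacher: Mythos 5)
Your proof is correct. The boundedness half coincides with the paper's: both combine the quadratic growth of $l$ and $\Phi$ with the uniform a priori bound $\|X^u(\cdot)\|^2_{{\cal M}^2_{\mathscr F}(0,T)}\le K$ from Lemma \ref{lem:4.7} and the boundedness of $\cal A$ in the metric $d$. For continuity the two arguments genuinely diverge. The paper disposes of it in one line by citing Lemma \ref{lem:4.7} together with the Lebesgue dominated convergence theorem; this is terse to the point of glossing over a real issue, since $X^{v_n}\to X^{v}$ only in ${\cal M}^2_{\mathscr F}(0,T)$ (hence in $L^2$), so a literal application of dominated convergence would first require passing to an a.s.\ convergent subsequence and then justifying a dominating function or uniform integrability of $\{\|X^{v_n}(T)\|_H^2\}$. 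Your route --- expanding $\Phi$ and $l$ along the segment via the fundamental theorem of calculus for the G\^ateaux derivative, bounding $\Phi_x$, $l_x$, $l_u$ by their linear growth, and applying Cauchy--Schwarz so that one factor stays uniformly bounded while the other is $\bigl(\mathbb E\|X_n(T)-X(T)\|_H^2\bigr)^{1/2}$ or $\bigl(\mathbb E\int_0^T(\|X_n-X\|_H^2+\|v_n-v\|_U^2)\,dt\bigr)^{1/2}$ --- is quantitative, avoids any appeal to a.s.\ convergence or domination, and in fact yields a modulus of continuity for $J$ on $\cal A$ rather than mere sequential continuity. The only hypothesis you lean on beyond the paper's is that the continuity of the G\^ateaux derivatives suffices for the one-dimensional fundamental theorem of calculus along segments, which is exactly what Assumption \ref{ass:2.5} provides and is also implicitly used elsewhere in the paper (e.g.\ in the Taylor expansions of Theorem \ref{them:3.1}). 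Both approaches are acceptable; yours is the more robust of the two.
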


\begin{proof}
For any $u(\cdot)\in {\cal A}$, under Assumption \ref{ass:2.5} and from Lemma \ref{lem:4.7} we have
\begin{eqnarray}
|J(u(\cdot))| &\leq& {\mathbb E} \bigg [ \int_0^T |l(t,X^u(t),u(t))|dt + |\Phi(X^u(T))| \bigg ] \nonumber \\
&\leq& K \bigg [ 1 + ||X^u(\cdot)||^2_{M_{\mathscr{F}}^2(0,T;V)}
+ ||u(\cdot)||_{M_{\mathscr{F}}^2(0,T;U)}^2 + ||X(T)||^2_{L^2(\Omega,{\mathscr{F}_T},
\mathbb P;H)} \bigg] \nonumber \\
&\leq& K \bigg [ 1 + ||X^u(\cdot)||^2_{{\cal M}^2_{\mathscr F}(0,T)} + ||{ u}(\cdot)||_{M^2_{\mathscr F}(0,T;U)}^2 \bigg] \nonumber \\
&\leq& K .
\end{eqnarray}
Here $K$ is a positive constant independent of $u(\cdot)$ and  may change from line to line. This implies the cost
functional  $J(u(\cdot))$ is bounded on ${\cal A}$.

To show the continuity of the cost functional, as in the proof of Lemma \ref{lem:4.7} we pick up the sequence $\{v_n(\cdot)\}_{n \geq 1}$
and its converging point $v(\cdot)$ in $\cal A$ as well as the corresponding state processes $X_n(\cdot)$ and $X(\cdot)$.
Thus using Lemma \ref{lem:4.7} and the Lebesgue dominated convergence theorem, we obtain
\begin{eqnarray}\label{eq:3.4}
J(v^n(\cdot))\rightarrow J(v(\cdot)) , \quad \mbox {as} \ n \rightarrow \infty .
\end{eqnarray}
The completes the proof.
\end{proof}

Define a penalized cost functional associated with Problem \eqref{pro:2.1} as
\begin{equation}\label{eq:3.2000}
J^\varepsilon(v(\cdot)) \triangleq \bigg\{ \big [ J(v(\cdot)) -J(\bar u(\cdot)) + \varepsilon \big ]^2+ \big | {\mathbb E} [ \phi( X^v(T))] \big |^2 \bigg\}^{\frac{1}{2}} ,
\quad \forall \varepsilon>0 .
\end{equation}
It is worthwhile to point out that we will study this functional over $\cal A$.

\begin{lemma}\label{lem:4.5}
$J^\varepsilon(v(\cdot))$ is bounded and continuous on ${\cal A}$ under the metric \eqref{eq:3.13}.
\end{lemma}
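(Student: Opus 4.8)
The plan is to treat boundedness and continuity separately, reducing both to facts already established for $J(\cdot)$ and for the control-to-state map ${\cal I}$.

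For boundedness I would bound the two terms under the square root in \eqref{eq:3.2000} individually. The first term $[J(v(\cdot))-J(\bar u(\cdot))+\varepsilon]^2$ is uniformly bounded in $v(\cdot)\in{\cal A}$ because $J(\cdot)$ is bounded on ${\cal A}$ by Lemma \ref{lem:4.4}, while $J(\bar u(\cdot))$ and $\varepsilon$ are fixed. For the second term I would invoke the quadratic growth bound $|\phi(x)|\le C(1+\|x\|_H^2)$ from Assumption \ref{ass:2.5}(iii) to get $|{\mathbb E}[\phi(X^v(T))]|\le C(1+{\mathbb E}\|X^v(T)\|_H^2)$, and then control ${\mathbb E}\|X^v(T)\|_H^2\le\|X^v(\cdot)\|^2_{S^2_{\mathscr F}(0,T;H)}\le\|X^v(\cdot)\|^2_{{\cal M}^2_{\mathscr F}(0,T)}\le K$ by the a priori estimate \eqref{eq:5.7} of Lemma \ref{lem:4.7}. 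Since both terms are bounded independently of $v(\cdot)$, so is $J^\varepsilon(v(\cdot))$.

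For continuity, since $J^\varepsilon$ is the square root of a sum of squares and $x\mapsto\sqrt{x}$ is continuous on $[0,\infty)$, it suffices to show that the two maps $v(\cdot)\mapsto J(v(\cdot))$ and $v(\cdot)\mapsto{\mathbb E}[\phi(X^v(T))]$ are continuous on $({\cal A},d)$. The first is exactly Lemma \ref{lem:4.4}. For the second I would take $v_n(\cdot)\to v(\cdot)$ in $({\cal A},d)$ with corresponding states $X^{v_n}(\cdot)$, $X^v(\cdot)$, and apply the mean value theorem to the G\^ateaux differentiable $\phi$ to write $\phi(X^{v_n}(T))-\phi(X^v(T))=\int_0^1(\phi_x(X^v(T)+\theta(X^{v_n}(T)-X^v(T))),X^{v_n}(T)-X^v(T))_H\,d\theta$; the growth bound $\|\phi_x(x)\|_H\le C(1+\|x\|_H)$ together with the Cauchy--Schwarz inequality then yields $|{\mathbb E}[\phi(X^{v_n}(T))]-{\mathbb E}[\phi(X^v(T))]|\le C\{{\mathbb E}(1+\|X^{v_n}(T)\|_H+\|X^v(T)\|_H)^2\}^{1/2}\{{\mathbb E}\|X^{v_n}(T)-X^v(T)\|_H^2\}^{1/2}$.

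The main obstacle, and the only place where genuine work is required, is controlling the right-hand side of this last estimate. The first factor is bounded uniformly in $n$ by the a priori estimate \eqref{eq:5.7}, while the second factor is dominated by $\|X^{v_n}(\cdot)-X^v(\cdot)\|^2_{{\cal M}^2_{\mathscr F}(0,T)}$ and therefore tends to zero by the continuity of ${\cal I}$ established in Lemma \ref{lem:4.7}. Hence ${\mathbb E}[\phi(X^{v_n}(T))]\to{\mathbb E}[\phi(X^v(T))]$, which proves continuity of the constraint functional and completes the argument. As an alternative to the mean value estimate one could pass to an a.s.-convergent subsequence of $X^{v_n}(T)$ and invoke dominated convergence, using the $L^2$-bound to secure uniform integrability, but the estimate above avoids subsequences altogether.
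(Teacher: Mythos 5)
Your proof is correct and takes essentially the same route as the paper, whose entire argument is the one-line remark that the result follows immediately from Lemma \ref{lem:4.4} and Lemma \ref{lem:4.7}. You have simply written out the details the paper leaves implicit --- the quadratic growth of $\phi$ together with the a priori estimate \eqref{eq:5.7} for boundedness, and the mean value estimate combined with the continuity of ${\cal I}$ for the constraint term --- so there is nothing to correct.
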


\begin{proof}
The proof can be obtained by Lemma \ref{lem:4.4} and Lemma \ref{lem:4.7} immediately.
\end{proof}

Now we introduce an auxiliary optimal control problem without state constraint:

\begin{problem}[$(SC)^\varepsilon$]
Find an admissible control such that
\begin{eqnarray}
\inf_{v(\cdot)\in {\cal A}} J^\varepsilon(v(\cdot)),
\end{eqnarray}
where the state process is given by (\ref{eq:4.1}) and the cost functional $J^\varepsilon(v(\cdot))$ is given by (\ref{eq:3.4}).
\end{problem}

From the definition of the penalized cost functional (\ref{eq:3.2000}), we see that
\begin{eqnarray}
J^\varepsilon(\bar u(\cdot)) = \varepsilon
 \leq  \inf_{v(\cdot)\in {\cal A}} J^\varepsilon(v(\cdot))+\varepsilon.
\end{eqnarray}
An application of Ekeland's variational principle shows that there is a $u^\varepsilon(\cdot)\in {\cal A}$ such that
\begin{eqnarray}\label{eq:5.1}
\left\{
\begin{aligned}
& J^\varepsilon(u^\varepsilon(\cdot))\leq J^\varepsilon(\bar u(\cdot))=\varepsilon  , \\
& d(u^\varepsilon(\cdot),\bar u(\cdot))\leq \varepsilon^{\frac{1}{2}}, \\
& J^\varepsilon(v(\cdot))- J^\varepsilon(u^\varepsilon(\cdot))\geq -\varepsilon^{\frac{1}{2}}d(u^\varepsilon(\cdot),v(\cdot)), \quad \forall v(\cdot)\in {\cal A}.
\end{aligned}
\right.
\end{eqnarray}
Define a convex perturbed control of ${u}^{\varepsilon }\left( \cdot \right)$ as
\begin{eqnarray}\label{eq:5.2}
u^{\varepsilon ,\rho}\left( \cdot \right) \triangleq {u}^{\varepsilon }(\cdot )+\rho(%
u\left( \cdot \right)-u^{\varepsilon }(\cdot ) ),
\end{eqnarray}
where $u\left( \cdot \right)$ is an arbitrary admissible control in ${\cal A}$ and $0\leq \rho\leq 1$. It is easy to verify that
$u^{\varepsilon,\rho }\left( \cdot \right)$ is also in ${\cal A}$. Suppose that $X^{\epss,\rho}(\cdot)$ and
$X^{\epss}(\cdot)$ are the state processes corresponding to $u^{\epss, \rho}(\cdot)$ an $ u^\epss(\cdot)$, respectively.
By \eqref{eq:5.1} and the fact
\begin{eqnarray}\label{eq:5.3}
d\left( u^{\varepsilon ,\rho}\left( \cdot \right) ,{u}^{\varepsilon
}\left( \cdot \right) \right) \leq C\rho,
\end{eqnarray}
we have
\begin{eqnarray}\label{eq:3.9}
J^\varepsilon(u^{\varepsilon,\rho}(\cdot))-J^\varepsilon(u^\varepsilon(\cdot))
\geq {-\varepsilon ^{\frac{1}{2}}d\left( u^{\varepsilon ,\rho}\left(t\right),{u}^{\varepsilon}\left( t\right) \right) }
\geq -\varepsilon^{\frac{1}{2}}C\rho.
\end{eqnarray}

On the other hand,  from the definition of $J^\varepsilon(\bar u(\cdot))$, we have
\begin{eqnarray}\label{eq:3.17}
J^\varepsilon(u^{\varepsilon,\rho}(\cdot))-J^\varepsilon(u^\varepsilon(\cdot))
&=& \frac{[J^\varepsilon(u^{\varepsilon,\rho}(\cdot))]^2-[J^\varepsilon(u^\varepsilon(\cdot))]^2}
{J^\varepsilon(u^{\varepsilon,\rho}(\cdot))+J^\varepsilon(u^\varepsilon(\cdot))} \nonumber \\
&=& \frac {J(u^{\varepsilon,\rho}(\cdot))+J(u^\varepsilon(\cdot))-2J(\bar u(\cdot))+2\varepsilon}
{J^\varepsilon(u^{\varepsilon,\rho}(\cdot))+J^\varepsilon(u^\varepsilon(\cdot))}\times[J(u^{\varepsilon,\rho}(\cdot))-J(u^\varepsilon(\cdot))] \nonumber \\
&& +\frac {{\mathbb E}[\phi(X^{\varepsilon,\rho}(T))]+{\mathbb E}[\phi(X^\varepsilon(T))]}{J^\varepsilon(u^{\varepsilon,\rho}(\cdot))+J^\varepsilon(u^\varepsilon(\cdot))}
\times \big \{ {\mathbb E}[\phi(X^{\varepsilon,\rho}(T))]-{\mathbb E}[\phi(X^\varepsilon(T))] \big \} \nonumber \\
&=& {\lambda}^{\varepsilon,\rho}[{J(u^{\varepsilon,\rho}(\cdot))-J(u^\varepsilon(\cdot))}]
+\mu^{\varepsilon,\rho} \big \{ {\mathbb E}[\phi(X^{\varepsilon,\rho}(T))]-{\mathbb E}[\phi(X^\varepsilon(T))] \big \} ,
\end{eqnarray}
where
\begin{eqnarray}
{\lambda}^{\varepsilon,\rho}\triangleq\frac {J(u^{\varepsilon,\rho}(\cdot))+J(u^\varepsilon(\cdot))-2J(\bar
u(\cdot))+2\varepsilon}{J^\varepsilon(u^{\varepsilon,\rho}(\cdot))+J^\varepsilon(u^\varepsilon(\cdot))}
\end{eqnarray}
and
\begin{eqnarray}
\mu^{\varepsilon,\rho}\triangleq\frac{{\mathbb E}[\phi(X^{\varepsilon,\rho}(T))]+{\mathbb E}[\phi(X^\varepsilon(T))]}
{J^\varepsilon(u^{\varepsilon,\rho}(\cdot))+J^\varepsilon(u^\varepsilon(\cdot))} .
\end{eqnarray}
From \eqref{eq:5.3}, we have
\begin{eqnarray} \label{eq:5.8}
\lim_{\rho\rightarrow 0} d\left( u^{\varepsilon ,\rho}\left( \cdot \right) ,{u}^{\varepsilon}\left( \cdot \right) \right) =0
\end{eqnarray}
Then it follows from Lemma \ref{lem:4.4} and Lemma \ref{lem:4.5} that
\begin{eqnarray} \label{eq:5.9}
\lim_{\rho\rightarrow 0} ||X^{\epss,\rho}(\cdot)-X^{ \epss}(\cdot)||_{{\cal M}^2_{\mathscr F}(0,T)}^2=0
\end{eqnarray}
and
\begin{eqnarray}
\lim_{\rho\rightarrow 0} J^{\epss}(u^{\epss,\rho}(\cdot))= J^{\epss}(u^{\epss}(\cdot)) .
\end{eqnarray}
Consequently,
\begin{eqnarray}\label{eq:5.10}
\lim_{\rho\rightarrow 0} { \lambda}^{\varepsilon,\rho}= {\lambda}^{\varepsilon}, \quad \lim_{\rho \rightarrow 0} \mu^{\varepsilon,\rho}= \mu^{\varepsilon} ,
\end{eqnarray}
where
\begin{eqnarray}
{\lambda}^{\varepsilon}\triangleq\frac {J(u^{\varepsilon}(\cdot))-J(\bar u(\cdot))+\varepsilon}{J^\varepsilon(u^\varepsilon(\cdot))}
\end{eqnarray}
and
\begin{eqnarray}
\mu^{\varepsilon}\triangleq\frac{{\mathbb E}[\phi(X^{\varepsilon}(0))]}{J^\varepsilon(u^{\varepsilon}(\cdot))} .
\end{eqnarray}
Note that
\begin{eqnarray}
|{\lambda}^\epss|^2+|\mu^\epss|^2=1.
\end{eqnarray}

Therefore, there exists a subsequence $\{({\lambda}^\epss, \mu^\epss)\}_{\epss > 0}$   ( still denoted also by $\{({\lambda}^\epss, \mu^\epss)\}_{\epss > 0}$, such that
\begin{eqnarray} \label{eq:5.13}
\lim_{\epss\rightarrow 0} {\lambda}^{\varepsilon}= {\lambda}, \quad \lim_{\epss \rightarrow0} \mu^{\varepsilon}= \mu ,
\end{eqnarray}
and
\begin{eqnarray} \label{eq:5.14}
|{\lambda}|^2+|\mu|^2=1.
\end{eqnarray}

\section{Stochastic Maximum Principle}

In this section, we first drive a variational formula for the penalized cost functional $J^\epss(u(\cdot))$.

To simplify our notation, we write partial derivatives of $b,g \sigma$ and $l$ as
\begin{eqnarray*}
& \varphi_x^{\epss,\rho}(t) \triangleq \varphi_x (t,{X}^{\varepsilon,\rho}(t),{u}^{\varepsilon,\rho}(t)), \\
& \varphi_x^{\epss}(t) \triangleq \varphi_a (t,{X}^{\varepsilon}(t),{u}^{\varepsilon}(t)), \\
& \bar\varphi_x(t) \triangleq \varphi_a (t,\bar{X}(t),\bar{u}(t)),
\end{eqnarray*}
where $\varphi = b, g,\sigma$ and $l$.

Define the Hamiltonian ${\cal H}: [ 0, T ] \times \Omega \times H  \times {\mathscr U} \times H\times H \times  M^{\nu,2}( E; H)\times \mathbb R
\rightarrow {\mathbb R}$ by
\begin{eqnarray}\label{eq:5.3}
{\cal H} ( t, x, u, p, q, r(\cdot),\lambda ) := \left ( b ( t, x, u ), p \right )_H
+\left( g ( t, x,  u), q \right)_H
+\int_{E}\left( \sigma ( t,e, x,  u), r(t,e) \right)_H\nu(de)
+ \lambda l ( t, x, u ) .
\end{eqnarray}
Using Hamiltonian ${\cal H}$,
the adjoint equation \eqref{eq:4.4}
can be written in the following form:

\begin{eqnarray}\label{eq:5.4}
\begin{split}
   \left\{\begin{array}{ll}
d\bar p(t)=&-\bigg[A^*(t)\bar p(t)+B(t)^*\bar q(t)+\bar {\cal H}_{x} (t)\bigg]dt+\bar q(t)dW(t)+\displaystyle
\int_{{E}}\bar r(t, e)\tilde{\mu}(de, dt),
~~~~0\leqslant t\leqslant T,
\\ \bar p(T)=&\Phi_x( \bar X(T)),
  \end{array}
 \right.
 \end{split}
  \end{eqnarray}
where we denote
\begin{eqnarray}\label{eq:5.6}
\bar {\cal H} (t) \triangleq {\cal H} ( t,
\bar x (t), \bar u (t), \bar p (t),
\bar q (t),
\bar r(t,\cdot) ).
\end{eqnarray}

Similarly, for notational simplify, we write partial derivatives of $H$ as
\begin{eqnarray*}
&{\cal H}_a^{\epss, \rho}(t) \triangleq
{\cal H}_a (t,{X}^{\epss, \rho}(t),{u}^{\epss, \rho}(t),{p}^{\epss, \rho}(t),q^{\epss, \rho}(t),r^{\epss, \rho}(t,\cdot),{\lambda}^{\epss, \rho}), \\
&{\cal H}_a^{\epss}(t) \triangleq H_a (t,{X}^{\epss}(t),{u}^{\epss}(t),
{p}^{\epss}(t),q^{\epss}(t),r^{\epss}(t,\cdot),
{\lambda}^{\epss}), \\
&{\bar{\cal H}}_a (t) \triangleq H_a (t,\bar{X}(t),\bar{u}(t),\bar{p}(t),{\bar q}(t), \bar r(t,\cdot),{\lambda}) .
\end{eqnarray*}
where $a = x$ or $u$.

For the admissible pair $({u}^{\varepsilon, \rho}(\cdot);{X}^{\varepsilon,\rho}(\cdot))$ and $({u}^{\varepsilon}(\cdot);
{X}^{\varepsilon}(\cdot))$ and the optimal pair $(\bar{u}(\cdot);\bar{X}(\cdot))$, the corresponding adjoint processes are denoted by
$\{ (p^{\epss,\rho}(t), q^{\epss,\rho}(t),
r^{\epss,\rho}(t,\cdot)), 0 \leq t \leq T \}$, $\{ (p^{\epss}(t), q^{\epss}(t),
r^{\epss,\rho}(t)), 0 \leq t \leq T\}$ and $\{ {\bar p}(t), \bar q(t),
 \bar r(t,\cdot)),0 \leq t \leq T \}$.
We now define the adjoint equations for $\{ (p^{\epss,\rho}(t), q^{\epss,\rho}(t),
r^{\epss,\rho}(t,\cdot)), 0 \leq t \leq T \}$, $\{ (p^{\epss}(t), q^{\epss}(t),
r^{\epss,\rho}(t)), 0 \leq t \leq T\}$ and $\{ {\bar p}(t), \bar q(t),
 \bar r(t,\cdot)),0 \leq t \leq T \}$ as
\begin{eqnarray}\label{eq:4.4}
\left\{
\begin{aligned}
d p^{\epss,\rho}(t)=&-\bigg[A^*(t) p^{\epss,\rho}(t)+B(t)^* q^{\epss,\rho}(t)+ {\cal H}_{x}^{\epss,\rho} (t)\bigg]dt+ q^{\epss,\rho}(t)dW(t)+\displaystyle
\int_{{E}}r^{\epss,\rho}(t, e)\tilde{\mu}(de, dt),
~~~~0\leqslant t\leqslant T,
\\  p^{\epss,\rho}(T)=&\lambda^{\varepsilon, \rho}\Phi_x(  X^{\varepsilon,\rho}(T))+\mu
^{\varepsilon, \rho}\phi_y(X^{\varepsilon,\rho}(T)),
\end{aligned}
\right.
\end{eqnarray}
\begin{eqnarray}\label{eq:4.5}
\left\{
\begin{aligned}
d p^{\epss}(t)=&-\bigg[A^*(t) p^{\epss}(t)+B(t)^* q^{\epss}(t)+ {\cal H}_{x}^{\epss} (t)\bigg]dt+ q^{\epss}(t)dW(t)+\displaystyle
\int_{{E}}r^{\epss}(t, e)\tilde{\mu}(de, dt),
~~~~0\leqslant t\leqslant T,
\\  p^{\epss}(T)=
&\lambda^{\varepsilon}\Phi_x(  X^{\varepsilon}(T))+\mu
^{\varepsilon}\phi_y(X^{\varepsilon}(T)),
\end{aligned}
\right.
\end{eqnarray}
and
\begin{eqnarray}\label{eq:4.6}
\begin{split}
   \left\{\begin{array}{ll}
d\bar p(t)=&-\bigg[A^*(t)\bar p(t)+B(t)^*\bar q(t)+\bar {\cal H}_{x} (t)\bigg]dt+\bar q(t)dW(t)+\displaystyle
\int_{{E}}\bar r(t, e)\tilde{\mu}(de, dt),
~~~~0\leqslant t\leqslant T,
\\ \bar p(T)=&\lambda\Phi_x( \bar X(T))
+\mu\phi_x(\bar X(T)),
  \end{array}
 \right.
 \end{split}
  \end{eqnarray}
respectively. In fact, the adjoint equations \eqref{eq:4.4}, \eqref{eq:4.5} and \eqref{eq:4.6} are three linear BSEEs satisfying Assumptions \ref{ass:3.1} and
\ref{ass:3.2}.
Hence by Lemma \ref{lem:1.3}, it is easy to check that these three adjoint equations have unique solutions, respectively.

\begin{lemma}\label{lem:4.1}
Under Assumptions \ref{ass:2.5}, the following convergence results hold
\begin{eqnarray}\label{eq:6.16}
\begin{split}
&\lim _{\rho\rightarrow 0} {\mathbb E} \bigg [ \sup_{0\leq t\leq T}\|p^{\varepsilon,\rho}(t)-{p}^{\varepsilon}(t)\|^2_H \bigg ]
+ {\mathbb E} \bigg [ \int_{0}^ T \|p^{\varepsilon, \rho}(t)-{p}^{\varepsilon}(t)\|^2_V d t
+ {\mathbb E} \bigg [ \int_{0}^ T \|q^{\varepsilon, \rho}(t)-{q}^{\varepsilon}(t)\|^2_H d t\bigg ]
\\ &\quad \quad+ {\mathbb E} \bigg [ \int_{0}^ T \|r^{\varepsilon, \rho}(t,\cdot)
-{r}^{\varepsilon}(t,\cdot)\|^2_{{M}_\mathscr{F}^{\nu,2}{([0,T]\times  E; H)}} d t\bigg]= 0 ,
\end{split}
\end{eqnarray}
and
\begin{eqnarray} \label{eq:6.13}
\begin{split}
&\lim _{\epss\rightarrow 0} {\mathbb E} \bigg [ \sup_{0\leq t\leq T}\|p^{\varepsilon}(t)
-{\bar p}(t)\|^2_H \bigg ]
+ {\mathbb E} \bigg [ \int_{0}^ T \|p^{\varepsilon}(t)-{\bar p}(t)\|^2_V d t
+ {\mathbb E} \bigg [ \int_{0}^ T \|q^{\varepsilon}(t)-{\bar q}(t)\|^2_H d t\bigg ]
\\ &\quad \quad+ {\mathbb E} \bigg [ \int_{0}^ T \|r^{\varepsilon}(t,\cdot)
-{\bar r}^{}(t,\cdot)\|^2_{{M}_\mathscr{F}^{\nu,2}{([0,T]\times  E; H)}} d t\bigg]=0
\end{split}
\end{eqnarray}
\end{lemma}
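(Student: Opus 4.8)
The plan is to treat each of the two convergences as a stability estimate for a linear backward stochastic evolution equation satisfied by the \emph{difference} of the two adjoint triples, and then to reduce everything to showing that the terminal datum and the free term of that difference equation tend to zero.

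For the first limit, set $\delta p = p^{\epss,\rho}-p^\epss$, $\delta q = q^{\epss,\rho}-q^\epss$, $\delta r = r^{\epss,\rho}-r^\epss$. Subtracting \eqref{eq:4.5} from \eqref{eq:4.4} and recalling that, by the definition of the Hamiltonian $\cH$,
$$\cH_x^{\epss,\rho}(t) = (b_x^{\epss,\rho}(t))^* p^{\epss,\rho}(t) + (g_x^{\epss,\rho}(t))^* q^{\epss,\rho}(t) + \int_E (\sigma_x^{\epss,\rho}(t,e))^* r^{\epss,\rho}(t,e)\,\nu(de) + \lambda^{\epss,\rho} l_x^{\epss,\rho}(t),$$
I would split the drift difference $\cH_x^{\epss,\rho}-\cH_x^\epss$ into a part that is linear in $(\delta p,\delta q,\delta r)$ with the bounded coefficients $(b_x^{\epss,\rho})^*,(g_x^{\epss,\rho})^*,(\sigma_x^{\epss,\rho})^*$, and a genuine source term
$$I^{\epss,\rho}(t) = [(b_x^{\epss,\rho})^*-(b_x^\epss)^*]p^\epss + [(g_x^{\epss,\rho})^*-(g_x^\epss)^*]q^\epss + \int_E[(\sigma_x^{\epss,\rho})^*-(\sigma_x^\epss)^*]r^\epss\,\nu(de) + (\lambda^{\epss,\rho}l_x^{\epss,\rho}-\lambda^\epss l_x^\epss)$$
that does not involve the unknowns. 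Thus $(\delta p,\delta q,\delta r)$ solves a linear BSEE with the operators $A^*,B^*$, bounded zero-order coefficients, source $I^{\epss,\rho}$ and terminal value $\delta p(T)=\lambda^{\epss,\rho}\Phi_x(X^{\epss,\rho}(T))+\mu^{\epss,\rho}\phi_x(X^{\epss,\rho}(T))-\lambda^\epss\Phi_x(X^\epss(T))-\mu^\epss\phi_x(X^\epss(T))$. Since Assumption \ref{ass:2.5} makes the zero-order coefficients uniformly bounded, this equation satisfies Assumptions \ref{ass:3.1} and \ref{ass:3.2} with constants independent of $\rho$, so the continuous dependence estimate for BSEEs (Lemma \ref{lem:1.3}) yields
$$\mathbb E\Big[\sup_{0\le t\le T}\|\delta p(t)\|_H^2\Big] + \mathbb E\int_0^T\|\delta p\|_V^2\,dt + \mathbb E\int_0^T\|\delta q\|_H^2\,dt + \mathbb E\int_0^T\|\delta r(t,\cdot)\|_{M^{\nu,2}(E;H)}^2\,dt \le C\Big\{\mathbb E\|\delta p(T)\|_H^2 + \mathbb E\int_0^T\|I^{\epss,\rho}(t)\|_H^2\,dt\Big\},$$
with $C$ independent of $\rho$. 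The problem is thereby reduced to showing that the right-hand side vanishes as $\rho\to0$.

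Convergence of the terminal term follows from \eqref{eq:5.9} (so $X^{\epss,\rho}(T)\to X^\epss(T)$ in $L^2(\Omega;H)$) together with \eqref{eq:5.10} ($\lambda^{\epss,\rho}\to\lambda^\epss$, $\mu^{\epss,\rho}\to\mu^\epss$): writing $\delta p(T)$ as $\lambda^{\epss,\rho}(\Phi_x(X^{\epss,\rho}(T))-\Phi_x(X^\epss(T)))+(\lambda^{\epss,\rho}-\lambda^\epss)\Phi_x(X^\epss(T))$ plus the analogous $\phi$ terms, the continuity and linear growth of $\Phi_x,\phi_x$ in Assumption \ref{ass:2.5} let me pass to the limit by dominated convergence, extracting an a.s.\ convergent subsequence and dominating by the linear-growth bound (which is $L^2$ because $X^{\epss,\rho}(T)$ is bounded in $L^2$). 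The same mechanism handles the source term: each bracket in $I^{\epss,\rho}$ is a difference of bounded, continuous derivatives evaluated along $(X^{\epss,\rho},u^{\epss,\rho})$ versus $(X^\epss,u^\epss)$, multiplied by the fixed square-integrable processes $p^\epss,q^\epss,r^\epss$; since $u^{\epss,\rho}\to u^\epss$ in $M_{\mathscr F}^2(0,T;U)$ by \eqref{eq:5.8} and $X^{\epss,\rho}\to X^\epss$ by \eqref{eq:5.9}, along a subsequence the derivatives converge a.e.\ by continuity, and uniform boundedness supplies an $L^2$ dominating function, so each term tends to $0$ in $M_{\mathscr F}^2(0,T;H)$.

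The main obstacle is exactly this last step, because the G\^ateaux derivatives $b_x,g_x,\sigma_x,l_x$ are assumed only continuous, not Lipschitz, so no direct bound of $\|I^{\epss,\rho}\|$ by $d(u^{\epss,\rho},u^\epss)$ is available; one must argue instead by continuity plus dominated convergence, and since the state convergence is only in $L^2$ (hence a.s.\ only along subsequences), full convergence is recovered through the standard device that every subsequence admits a further subsequence along which the limit holds. The $\sigma$-term needs the extra remark that $\nu(E)<\infty$ and $\sigma_x$ bounded allow Fubini and dominated convergence on $[0,T]\times\Omega\times E$. Finally, the second convergence \eqref{eq:6.13} is proved verbatim by replacing the superscripts $(\epss,\rho)$ by $\epss$ and $\epss$ by the bar quantities, using $u^\epss\to\bar u$ (from $d(u^\epss,\bar u)\le\epss^{1/2}$ in \eqref{eq:5.1}) and Lemma \ref{lem:4.7} to obtain $X^\epss\to\bar X$, together with \eqref{eq:5.13} for $\lambda^\epss\to\lambda$ and $\mu^\epss\to\mu$.
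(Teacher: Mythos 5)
Your proposal is correct and follows essentially the same route as the paper: the paper likewise applies the continuous dependence estimate for BSEEs (Lemma \ref{lem:1.4}, which is what you mean where you cite Lemma \ref{lem:1.3}) to the pair of adjoint equations, obtaining exactly your source term $I^{\epss,\rho}$ and terminal difference on the right-hand side, and then concludes via \eqref{eq:5.9} and \eqref{eq:5.10}. Your discussion of why the limit passage requires continuity of the derivatives plus dominated convergence along subsequences (rather than a Lipschitz bound in $d(u^{\epss,\rho},u^\epss)$) is in fact more detailed than the paper, which leaves that step implicit.
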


\begin{proof}
By the continuous dependence theorem of BSEE (i.e., Lemma \ref{lem:1.4}), we derive
\begin{eqnarray}
\begin{split}
&{\mathbb E} \bigg [ \sup_{0\leq t\leq T}\|p^{\varepsilon,\rho}(t)-{p}^{\varepsilon}(t)\|^2_H \bigg ]
+ {\mathbb E} \bigg [ \int_{0}^ T \|p^{\varepsilon, \rho}(t)-{p}^{\varepsilon}(t)\|^2_V d t
+ {\mathbb E} \bigg [ \int_{0}^ T \|q^{\varepsilon, \rho}(t)-{q}^{\varepsilon}(t)\|^2_H d t\bigg ]
\\ &\quad \quad+ {\mathbb E} \bigg [ \int_{0}^ T \|r^{\varepsilon, \rho}(t,\cdot)
-{r}^{\varepsilon}(t,\cdot)\|^2_{{M}_\mathscr{F}^{\nu,2}{([0,T]\times  E; H)}} d t\bigg] \nonumber \\
& \leq K \bigg \{ {\mathbb E} \bigg[ \int_0^T ||(b_x^{\varepsilon,\rho}(t)
-b_x^{\epss}(t))\cdot p^{\epss}(t)
+(g_x^{\varepsilon,\rho}(t)
-g_x^{\epss}(t))\cdot q^{\epss}(t)+
\int_E(\sigma_x^{\varepsilon,\rho}(t,e)
-\sigma_x^{\epss}(t,e))\cdot r^{\epss}(t,e)
\nu (de)
\\&\quad\quad+
 {\lambda}^{\varepsilon,\rho}l_x^{\varepsilon,\rho}(t)-{\lambda} ^{\varepsilon}l_x^{\varepsilon}(t)||^2_H d t \bigg ]+\mathbb E\bigg[||\lambda^{\varepsilon, \rho}\Phi_x(  X^{\varepsilon,\rho}(T))+\mu
^{\varepsilon, \rho}\phi_y(X^{\varepsilon}(T))-
\lambda^{\varepsilon}\Phi_x(  X^{\varepsilon}(T))-\mu
^{\varepsilon}\phi_y(X^{\varepsilon}(T))||_H.\bigg]
\bigg\}
 \end{split}
\end{eqnarray}
Then using \eqref{eq:5.9} and \eqref{eq:5.10} gives the desired result \eqref{eq:6.16}.
The proof of \eqref{eq:6.13} is similar and omitted here.
\end{proof}

%\begin{eqnarray}\label{eq:6.1}
%\left\{
%\begin{aligned}
%dk^{\varepsilon,\rho}(t) =& -\big[A^*(t)k^{\varepsilon,\rho}(t)+(b_y^{\varepsilon,\rho})^*(t)\cdot k^{\varepsilon,\rho}(t) +{\lambda}^{\varepsilon, \rho} l_y^{\varepsilon,\rho}(t)\big]dt
%-\big[(b_{z}^{\varepsilon,\rho})^*(t)\cdot k^{\varepsilon,\rho}(t)+{\lambda} ^{\varepsilon, \rho}l_{z}^{\varepsilon, \rho}(t)\big]dW(t), \\
%k(0) =& -{\lambda} ^{\varepsilon,\rho}\gamma_y(X^{\varepsilon,\rho}(T))-\mu^{\varepsilon,\rho}\phi_y(X^{\varepsilon,\rho}(T)),
%\end{aligned}
%\right.
%\end{eqnarray}
%\begin{eqnarray}\label{eq:6.3}
%\left\{
%\begin{aligned}
%dk^{\varepsilon}(t)=& -\big[A^*(t)k^{\varepsilon}(t)+(b_y^{\varepsilon})^*(t)\cdot k^{\varepsilon}(t) +{\lambda}^{\varepsilon} l_y^{\varepsilon}(t)\big]dt
%-\big[(b_{z}^{\varepsilon})^*(t)\cdot k^{\varepsilon}(t)+{\lambda}^{\varepsilon}l_{z}^{\varepsilon}(t)\big]dW(t), \\
%k(0)=& -{\lambda}^{\varepsilon}\gamma_y(y^{\varepsilon}(0))-\mu^{\varepsilon}\phi_y(y^{\varepsilon}(0)),
%\end{aligned}
%\right.
%\end{eqnarray}
%and
%\begin{eqnarray}\label{eq:6.5}
%\left\{
%\begin{aligned}
%d k(t)=& -\big[A^*(t) k(t)+\bar b_y^*(t)\cdot k(t) + {\lambda} \bar l_y(t)\big]dt-\big[\bar b_{z}^*(t)\cdot k(t)+\mu\bar l_{z}(t)\big]dW(t),\\
%k(0)=& -{\lambda}\gamma_y(\bar y(0))-\mu\phi_y(\bar y(0)),
%\end{aligned}
%\right.
%\end{eqnarray}

In the next lemma, we give a representation of the difference $J^{\varepsilon}(u^{\varepsilon,\rho}(\cdot))-J^\epss( u^\epss(\cdot))$
in terms of the Hamiltonian $H$, the adjoint process $(p^{\epss,\rho}(\cdot),
q^{\epss,\rho}(\cdot), r^{\epss,\rho}(\cdot,\cdot))$ and other relevant expressions associated with the admissible
pair $({u}^{\epss, \rho}(\cdot);{X}^{\epss,\rho}(\cdot))$.

\begin{lemma}\label{lem:4.3}
Under Assumptions \ref{ass:2.5}, it holds
\begin{eqnarray}\label{eq:6.15}
J^{\varepsilon}(u^{\varepsilon,\rho}(\cdot))-J^\epss(u^\epss(\cdot))
&=& {\mathbb E} \bigg [ \int_0^T \big\{ {\cal H}^{\epss, \rho} (t)
- {\cal H} (t,{X}^{\epss}(t),
{u}^{\epss}(t),{p}^{\epss,\rho}(t),q^{\epss, \rho}(t),r^{\epss, \rho}(t,\cdot),{\lambda}^{\epss, \rho}) \nonumber \\
&& -{\cal H}_x^{\epss, \rho}(t) \cdot(X^{\epss,\rho}(t)-X^{\epss}(t)) \big\} dt \bigg ] \nonumber \\
&& + \mu^{\varepsilon,\rho} {\mathbb E} \big [ \phi^{\epss,\rho}(X^{\epss,\rho}(T))
-\phi^{\epss}(X^{\epss}(T))
- \phi_x(X^{\epss,\rho}(T))
\cdot(X^{\epss,\rho}(T)-X^{\epss}(T)) \big ] \nonumber \\
&& + {\lambda}^{\varepsilon,\rho} {\mathbb E} \big [ \Phi^{\epss,\rho}(X^{\epss,\rho}(T))
-\Phi^{\epss}(X^{\epss}(T))
- \Phi_x(X^{\epss,\rho}(T))\cdot(X^{\epss, \rho}(T)-y^{\epss}(T)) \big ].
\end{eqnarray}
\end{lemma}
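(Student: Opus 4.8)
The plan is to take the decomposition \eqref{eq:3.17} as the starting point, since it already expresses $J^{\varepsilon}(u^{\varepsilon,\rho}(\cdot))-J^\epss(u^\epss(\cdot))$ as $\lambda^{\epss,\rho}$ times the cost increment $J(u^{\epss,\rho}(\cdot))-J(u^{\epss}(\cdot))$ plus $\mu^{\epss,\rho}$ times the constraint increment ${\mathbb E}[\phi(X^{\epss,\rho}(T))]-{\mathbb E}[\phi(X^{\epss}(T))]$. The whole task is then to rewrite these two increments through the adjoint triple $(p^{\epss,\rho},q^{\epss,\rho},r^{\epss,\rho})$ defined by \eqref{eq:4.4}. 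Set $\widehat X(t):=X^{\epss,\rho}(t)-X^{\epss}(t)$ and write $b^{\epss,\rho},g^{\epss,\rho},\sigma^{\epss,\rho},l^{\epss,\rho}$ and $b^{\epss},\dots$ for the coefficients evaluated along the two state--control pairs. Subtracting the two copies of the state equation \eqref{eq:4.1} shows that $\widehat X$ solves a linear SEE in the Gelfand triple with drift $A(t)\widehat X(t)+(b^{\epss,\rho}(t)-b^{\epss}(t))$, diffusion $B(t)\widehat X(t)+(g^{\epss,\rho}(t)-g^{\epss}(t))$, jump coefficient $\sigma^{\epss,\rho}(t,e)-\sigma^{\epss}(t,e)$, and $\widehat X(0)=0$.

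The key step is to apply the It\^o product (duality) formula for the pairing $\langle p^{\epss,\rho}(t),\widehat X(t)\rangle$ in the triple $(V,H,V^*)$ with jumps, and to take expectations. Because $\widehat X(0)=0$ and the local-martingale parts are genuine martingales under the integrability guaranteed by Lemmas \ref{thm:3.2} and \ref{lem:1.3}, only the predictable drift survives, and in that drift the unbounded operators cancel: the adjoint term $-\langle A^*(t)p^{\epss,\rho},\widehat X\rangle$ annihilates $\langle p^{\epss,\rho},A(t)\widehat X\rangle$ coming from $\langle p^{\epss,\rho},d\widehat X\rangle$, while $-\langle B^*(t)q^{\epss,\rho},\widehat X\rangle=-(q^{\epss,\rho},B(t)\widehat X)_H$ is killed by the Brownian quadratic-covariation term $(q^{\epss,\rho},B(t)\widehat X)_H$, the compensated-Poisson covariation supplying the $r^{\epss,\rho}$--integral. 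What remains is
\begin{equation*}
{\mathbb E}\big[\langle p^{\epss,\rho}(T),\widehat X(T)\rangle\big]
={\mathbb E}\bigg[\int_0^T\Big\{(p^{\epss,\rho},b^{\epss,\rho}-b^{\epss})_H+(q^{\epss,\rho},g^{\epss,\rho}-g^{\epss})_H+\int_E(r^{\epss,\rho},\sigma^{\epss,\rho}-\sigma^{\epss})_H\,\nu(de)-\big({\cal H}_x^{\epss,\rho},\widehat X\big)_H\Big\}\,dt\bigg].
\end{equation*}

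By the definition of the Hamiltonian ${\cal H}$, the first three integrands are exactly ${\cal H}^{\epss,\rho}(t)-{\cal H}(t,X^{\epss}(t),u^{\epss}(t),p^{\epss,\rho}(t),q^{\epss,\rho}(t),r^{\epss,\rho}(t,\cdot),\lambda^{\epss,\rho})-\lambda^{\epss,\rho}(l^{\epss,\rho}(t)-l^{\epss}(t))$, so the displayed identity can be solved for ${\mathbb E}[\int_0^T\lambda^{\epss,\rho}(l^{\epss,\rho}-l^{\epss})\,dt]$. Inserting the terminal value $p^{\epss,\rho}(T)=\lambda^{\epss,\rho}\Phi_x(X^{\epss,\rho}(T))+\mu^{\epss,\rho}\phi_x(X^{\epss,\rho}(T))$ into the left-hand side, and feeding the result back into \eqref{eq:3.17} together with the plain identity $J(u^{\epss,\rho}(\cdot))-J(u^{\epss}(\cdot))={\mathbb E}[\int_0^T(l^{\epss,\rho}-l^{\epss})\,dt+\Phi(X^{\epss,\rho}(T))-\Phi(X^{\epss}(T))]$, the boundary contributions $\lambda^{\epss,\rho}\Phi_x$ and $\mu^{\epss,\rho}\phi_x$ group with the $\Phi$-- and $\phi$--increments to form precisely the two second-order remainder brackets of \eqref{eq:6.15}, while the Hamiltonian integral passes through unchanged; this yields \eqref{eq:6.15}.

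I expect the main obstacle to be the rigorous justification of the It\^o/duality product rule in the $(V,H,V^*)$ framework with a Poisson jump term --- in particular the careful bookkeeping of the two quadratic-covariation contributions (the Brownian one producing the $(q^{\epss,\rho},g^{\epss,\rho}-g^{\epss})_H$ term together with the cancelling $B$--term, and the compensated-Poisson one producing the $r^{\epss,\rho}$--integral), and the verification that the stochastic integrals are true martingales so that their expectations vanish. Once the pairing formula is in hand, the remainder is algebraic regrouping driven by the definition of ${\cal H}$ and the terminal condition of \eqref{eq:4.4}.
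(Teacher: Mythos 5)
Your proposal is correct and follows essentially the same route as the paper: it starts from the decomposition \eqref{eq:3.17}, applies the It\^o/duality formula to $(p^{\epss,\rho}(t),X^{\epss,\rho}(t)-X^{\epss}(t))_H$ in the Gelfand triple, and regroups the resulting terms using the definition of ${\cal H}$ and the terminal condition of \eqref{eq:4.4}. Your write-up is in fact more explicit than the paper's about the $A$/$A^*$ and $B$/$B^*$ cancellations and the martingale justification, which the paper leaves implicit in the phrase ``applying It\^o formula.''
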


\begin{proof}
From the definition of the Hamiltonian $\cal H$ and $J^\epss( u(\cdot))$ (see \eqref{eq:3.17}), we deduce
\begin{eqnarray}\label{eq:4.10}
J^{\varepsilon}(u^{\varepsilon,\rho}(\cdot))-J^\epss( u^\epss(\cdot))
&=& {\lambda}^{\varepsilon,\rho}[{J(u^{\varepsilon,\rho}(\cdot))-J(u^\varepsilon(\cdot))}]
+\mu^{\varepsilon,\rho} {\mathbb E} \big [ \phi(X^{\varepsilon,\rho}(T)) - \phi(X^\varepsilon(T)) \big ] \nonumber \\
&=& {\mathbb E} \bigg [ \int_0^T \bigg\{ {\cal H}^{\epss, \rho} (t) - {\cal H} (t,{X}^{\epss}(t),
{u}^{\epss}(t),{p}^{\epss,\rho}(t),q^{\epss, \rho}(t),r^{\epss, \rho}(t,\cdot),{\lambda}^{\epss, \rho}) \nonumber \\
&& -  ( p^{\epss,\rho}(t), b^{\epss,\rho}(t)-b^{\epss}(t)))_H - (q^{\epss,\rho}(t), g^{\epss, \rho} (t)-g^{\epss} (t))_H \nonumber
\\&&-\int_{E}\bigg[(r^{\epss,\rho}(t,e)), \sigma^{\epss,\rho}(t,e)-\sigma^{\epss} (t,e))_H\nu(de)\bigg]\bigg\} dt \bigg ] \nonumber \\
&& + \mu^{\varepsilon,\rho} {\mathbb E} \big [ \phi(X^{\epss, \rho}(T))-\phi(X^{\epss}(T)) \big ]
+ {\lambda}^{\varepsilon,\rho} {\mathbb E} \big [ \Phi(X^{\epss, \rho}(T))-\Phi(X^{\epss}(T)) \big ] .
\end{eqnarray}
On the other hand,
\begin{eqnarray}
  \left\{
  \begin{aligned}
   d (X^{\epss,\rho} (t)-X^{\epss} (t))
   = & \ [ A (t)(X^{\epss,\rho} (t)-X^{\epss} (t))
   + (b ( t, X^{\epss,\rho} (t), u^{\epss,\rho}(t))-b ( t, X^{\epss} (t), u^{\epss}(t))) ] d t
\\&+ [ B(t)(X^{\epss,\rho} (t)-X^{\epss} (t))
   + (g( t, X^{\epss,\rho} (t), u^{\epss,\rho}(t))-g ( t, X^{\epss} (t), u^{\epss}(t))) ]d W(t)
 \\&+\int_E [\sigma( t,e, X^{\epss,\rho} (t), u^{\epss,\rho}(t))-\sigma ( t,e, X^{\epss} (t), u^{\epss}(t))) ]\tilde \mu(de,dt),  \\
X ^{\epss,\rho}(0)-X^\epss(0) = & \  0, \quad t \in [ 0, T ]
  \end{aligned}
  \right.
\end{eqnarray}
Then applying It\^{o} formula to $( p^{\epss,\rho}(t),   X^{\epss, \rho}(t)-X^{\epss}(t) )_H$ gives
\begin{eqnarray}\label{eq:4.12}
&& {\mathbb E} \bigg [ \int_0^T  \bigg\{ (p^{\epss,\rho}(t), b^{\epss, \rho} (t)-b^{\epss} (t))_H+(q^{\epss,\rho}(t), g^{\epss, \rho} (t)-g^{\epss} (t))_H
+\int_{E}(r^{\epss,\rho}(t,e), \sigma^{\epss, \rho} (t,e)-\sigma^{\epss} (t,e))_H\nu(de) \bigg\} dt \bigg ] \nonumber \\
&& = {\mathbb E} \bigg [ \int_0^T   {\cal H}_x^{\epss, \rho}(t) \cdot(X^{\epss,
\rho}(t)-X^{\epss}(t)) dt \bigg ] \nonumber +\mu^{\epss,\rho} {\mathbb E} \big [ \phi_x( X^{\epss}(T))\cdot(X^{\epss, \rho}(T)-X^{\epss}(T)) \big ]
\\&&~~~~+{\lambda}^{\epss,\rho} {\mathbb E} \big [ \Phi_x( X^\epss(T))\cdot(X^{\epss, \rho}(T)-X^{\epss}(T)) \big ].
\end{eqnarray}
Putting \eqref{eq:4.12} into \eqref{eq:4.10} leads to the desired representation \eqref{eq:6.15}.
\end{proof}

We have the following  basic Lemma.

\begin{lemma}\label{lem:3.2}
Under Assumptions \ref{ass:2.5}, it follows that
\begin{eqnarray}
\| X^{\varepsilon,\rho}(\cdot)
-{X}^\varepsilon(\cdot)\|_{{\cal M}^2_{\mathbb F}(0,T)}^2 = O (\rho^2),
\end{eqnarray}
and
\begin{eqnarray}
\| X^\varepsilon(\cdot)- {\bar X}(\cdot)\|_{{\cal M}^2_{\mathbb F}(0,T)}^2 = O (\varepsilon^2).
\end{eqnarray}
\end{lemma}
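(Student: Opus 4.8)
The plan is to obtain both estimates from a single ingredient, namely the Lipschitz dependence of the state on the control that is already isolated in the proof of Lemma~\ref{lem:4.7}. There the a priori estimate for the SEE (Lemma~\ref{thm:3.2}), the boundedness of the G\^ateaux derivatives $b_x,g_x,\sigma_x,b_u,g_u,\sigma_u$ from Assumption~\ref{ass:2.5}, and Gronwall's inequality combine to give, for any two admissible controls $v_1(\cdot),v_2(\cdot)\in{\cal A}$ with associated states $X^{v_1}(\cdot),X^{v_2}(\cdot)$, the uniform bound
\begin{eqnarray*}
\|X^{v_1}(\cdot)-X^{v_2}(\cdot)\|^2_{{\cal M}^2_{\mathscr F}(0,T)}\leq K\,d^2\big(v_1(\cdot),v_2(\cdot)\big),
\end{eqnarray*}
with $K$ independent of the two controls; this is exactly the chain of inequalities \eqref{eq5.9}. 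Granting this, each of the two claims reduces to controlling the corresponding control distance.

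For the first estimate I would take $v_1=u^{\varepsilon,\rho}(\cdot)$ and $v_2=u^{\varepsilon}(\cdot)$. By the definition \eqref{eq:5.2} of the convex perturbation, $u^{\varepsilon,\rho}(\cdot)-u^{\varepsilon}(\cdot)=\rho\big(u(\cdot)-u^{\varepsilon}(\cdot)\big)$, so that $d(u^{\varepsilon,\rho},u^{\varepsilon})=\rho\,d(u,u^{\varepsilon})\leq C\rho$ by the boundedness of ${\cal A}$, which is precisely \eqref{eq:5.3}. Substituting this into the displayed Lipschitz bound gives $\|X^{\varepsilon,\rho}(\cdot)-X^{\varepsilon}(\cdot)\|^2_{{\cal M}^2_{\mathscr F}(0,T)}\leq KC^2\rho^2=O(\rho^2)$, which is the first assertion.

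For the second estimate I would take $v_1=u^{\varepsilon}(\cdot)$ and $v_2=\bar u(\cdot)$, so that by the same inequality
\begin{eqnarray*}
\|X^{\varepsilon}(\cdot)-\bar X(\cdot)\|^2_{{\cal M}^2_{\mathscr F}(0,T)}\leq K\,d^2\big(u^{\varepsilon}(\cdot),\bar u(\cdot)\big),
\end{eqnarray*}
and the claimed rate $O(\varepsilon^2)$ is therefore equivalent to the sharp control-distance bound $d(u^{\varepsilon},\bar u)=O(\varepsilon)$. This is where I expect the main obstacle to lie. The Ekeland step as recorded in \eqref{eq:5.1} supplies only $d(u^{\varepsilon},\bar u)\leq\varepsilon^{1/2}$, corresponding to the choice $\lambda=\varepsilon^{1/2}$ of the free parameter in Ekeland's principle; this suffices for the continuity argument but yields only $K\varepsilon$, not the quadratic rate. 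To reach $d(u^{\varepsilon},\bar u)=O(\varepsilon)$ I would re-run Ekeland's principle with $\lambda=\varepsilon$, which forces $d(u^{\varepsilon},\bar u)\leq\varepsilon$ directly and hence $\|X^{\varepsilon}(\cdot)-\bar X(\cdot)\|^2_{{\cal M}^2_{\mathscr F}(0,T)}\leq K\varepsilon^2=O(\varepsilon^2)$. The delicate point, and the crux of the argument, is that this sharper proximity must be reconciled with the approximate-optimality inequality used downstream: the choice $\lambda=\varepsilon$ turns the variational coefficient $\varepsilon/\lambda$ into $1$ rather than $\varepsilon^{1/2}$, so the careful step is to verify that the same $u^{\varepsilon}$ can simultaneously witness the $O(\varepsilon)$ distance needed here and the vanishing-coefficient variational inequality required for the maximum principle in the next section.
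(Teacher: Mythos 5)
Your first estimate is correct and is, in substance, exactly the paper's argument: the continuous-dependence estimate for the state equation together with the boundedness of the G\^ateaux derivatives gives
$\| X^{\varepsilon,\rho}(\cdot)-X^{\varepsilon}(\cdot)\|^2_{{\cal M}^2_{\mathscr F}(0,T)}\le K\,{\mathbb E}\big[\int_0^T\|u^{\varepsilon,\rho}(t)-u^{\varepsilon}(t)\|_U^2\,dt\big]=K\rho^2\,{\mathbb E}\big[\int_0^T\|u(t)-u^{\varepsilon}(t)\|_U^2\,dt\big]\le K\rho^2$,
the last step by boundedness of ${\cal A}$. (The paper attributes the first inequality to the BSEE continuous dependence result, Lemma~\ref{lem:1.4}; that is a slip --- the state equation is a forward SEE, so the relevant tool is Lemma~\ref{thm:3.2}, which is what you invoke.)

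On the second estimate your diagnosis is exactly right, and what it exposes is a defect of the paper, not of your argument. Since \eqref{eq:5.1} comes from Ekeland's principle with $\lambda=\varepsilon^{1/2}$, it yields only $d(u^{\varepsilon},\bar u)\le\varepsilon^{1/2}$, hence $\|X^{\varepsilon}(\cdot)-\bar X(\cdot)\|^2_{{\cal M}^2_{\mathscr F}(0,T)}\le K\,d^2(u^{\varepsilon},\bar u)\le K\varepsilon$, i.e.\ $O(\varepsilon)$ and not $O(\varepsilon^2)$. The paper's own proof betrays this: it asserts $d^2(u^{\varepsilon},\bar u)\le\varepsilon^2$ --- an inequality that contradicts \eqref{eq:5.1} --- and then writes $O(\epss)$ (first power) in its final line, so the displayed claim $O(\varepsilon^2)$ in the lemma is a misstatement, and the honest output of the paper's own chain of inequalities is precisely the $O(\varepsilon)$ you derive. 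Your proposed repair, re-running Ekeland with $\lambda=\varepsilon$, is rightly self-criticized: it would turn the coefficient in the last inequality of \eqref{eq:5.1} into $\varepsilon/\lambda=1$, so the lower bound in \eqref{eq:3.9} would become $-C\rho$ rather than $-\varepsilon^{1/2}C\rho$, the variational inequality of Theorem~\ref{them:3.1} would read $\ge -C$ rather than $\ge -C\varepsilon^{1/2}$, and the passage to the limit $\varepsilon\to0$ in the final theorem would no longer yield the maximum principle; so that route is not viable. The correct resolution is not to strengthen the distance bound but to weaken the lemma: everywhere it is used (Theorem~\ref{them:3.1} and the limit argument via \eqref{eq:5.13} and \eqref{eq:4.22}) only $\|X^{\varepsilon}(\cdot)-\bar X(\cdot)\|_{{\cal M}^2_{\mathscr F}(0,T)}\to0$ is required, and for that the rate $O(\varepsilon)$ is ample.
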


\begin{proof}
By the continuous dependence theorem of BSEE (Lemma \ref{lem:1.4}) and the uniform boundedness of the G\^{a}teaux derivative $b_u$, we have
\begin{eqnarray*}
&&\| X^{\varepsilon,\rho}(\cdot)
-{X}^\varepsilon(\cdot)\|_{{\cal M}^2_{\mathbb F}(0,T)}^2
\\&\leq& K {\mathbb E} \bigg [ \int_0^T
\bigg\{\|b (t, {X}^\varepsilon(t),  u^{\varepsilon,\rho}(t)) - b^\varepsilon (t)\big\|^2_Hdt
+\| g(t, {y}^\varepsilon(t), z^\varepsilon(t), u^{\varepsilon,\rho}(t)) - g^\varepsilon (t)\big\|^2_H
\\&&\quad\quad+\int_E\bigg[\|\sigma (t, e,{X}^\varepsilon(t), u^{\varepsilon,\rho}(t)) - \sigma^\varepsilon (t)\big\|^2_H\bigg]\nu(de)\bigg\}dt\bigg] \\
&\leq& K {\mathbb E} \bigg [ \int_0^T \|u^{\varepsilon,\rho}(t)-{u^\epss}(t)\|^2_Udt\bigg] \\
&=& K \rho^2 {\mathbb E} \bigg[ \int_0^T \|v(t)-{u^\varepsilon}(t)\|^2_Udt\bigg] \\
&\leq& K \rho^2  \\
&=& O(\rho^2).
\end{eqnarray*}
Here $K$ is a generic positive constant and might change from line to line.

In the same vein, we deduce
\begin{eqnarray*}
\| X^\varepsilon(\cdot)-  {\bar X}(\cdot)\|_{{\cal M}^2_{\mathbb F}(0,T)}^2
&\leq& K {\mathbb E} \bigg[ \int_0^T \|u^{\varepsilon}(t)-{\bar u}(t)\|^2_Udt\bigg] \\
&=& K d^2(u^{\varepsilon}(t),{\bar u}(t)).\\
&\leq& K \varepsilon^2\\
&=& O(\epss).
\end{eqnarray*}
The proof is complete.
\end{proof}

Now we state the variational formula for the cost functional $J^\epss(\cdot)$.

\begin{theorem}\label{them:3.1}
Under Assumptions \ref{ass:2.5}, it follows that
for any admissible control $v(\cdot),$ the cost functional $J(u(\cdot))$ is
G\^{a}teaux differentiable at $ u^\epss(\cdot)$  in the direction $v(\cdot)- u^\epss(\cdot)$
and the corresponding G\^{a}teaux derivative $J'$ is given by
\begin{eqnarray}\label{eq:4.16}
\frac{d}{d\rho}J^\epss( u^\epss(\cdot)+\rho(v(\cdot)- u^\epss(\cdot)))|_{\rho=0}
&=& \lim_{\rho\rightarrow 0} \frac{J^\epss(u^\epss(\cdot)+\rho(v(\cdot)- u^\epss(\cdot)))-J^\epss(u^\epss(\cdot))}{\rho} \nonumber \\
&=& {\mathbb E} \bigg [ \int_0^T( {\cal H}_u^\epss(t), v(t)-{u}^\epss(t))_Udt \bigg ]\nonumber
\\
&\geq& -C\varepsilon ^{\frac{1}{2}}.
\end{eqnarray}
Here $\rho >0$ is a sufficiently small positive constant. %and
\end{theorem}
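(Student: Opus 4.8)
The plan is to begin from the exact representation of the increment $J^\epss(u^{\epss,\rho}(\cdot))-J^\epss(u^\epss(\cdot))$ furnished by Lemma \ref{lem:4.3}, to divide by $\rho$, and to compute the limit as $\rho\downarrow 0$ group by group. Throughout, $u^{\epss,\rho}(\cdot)=u^\epss(\cdot)+\rho(v(\cdot)-u^\epss(\cdot))$ as in \eqref{eq:5.2}, and I write $\Delta X=X^{\epss,\rho}-X^\epss$ and $\Delta u=\rho(v-u^\epss)$. Formula \eqref{eq:6.15} splits the increment into a running integral built from the Hamiltonian, a $\mu^{\epss,\rho}$-weighted terminal term in $\phi$, and a $\lambda^{\epss,\rho}$-weighted terminal term in $\Phi$. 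I would show that the two terminal groups contribute nothing to the derivative and that the entire first-order content comes from the running integral.

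For the running integral, since $b,g,\sigma,l$ are continuously G\^ateaux differentiable in $(x,u)$ by Assumption \ref{ass:2.5}, I would apply the fundamental theorem of calculus with the adjoint arguments frozen at $(p^{\epss,\rho},q^{\epss,\rho},r^{\epss,\rho},\lambda^{\epss,\rho})$, writing the running integrand as
\begin{align*}
&{\cal H}^{\epss,\rho}(t)-{\cal H}\big(t,X^\epss(t),u^\epss(t),p^{\epss,\rho}(t),q^{\epss,\rho}(t),r^{\epss,\rho}(t,\cdot),\lambda^{\epss,\rho}\big)-{\cal H}_x^{\epss,\rho}(t)\cdot\Delta X(t)\\
&=\int_0^1\big[{\cal H}_x\big(t,X^\epss+\theta\Delta X,u^\epss+\theta\Delta u,p^{\epss,\rho},q^{\epss,\rho},r^{\epss,\rho},\lambda^{\epss,\rho}\big)-{\cal H}_x^{\epss,\rho}(t)\big]\cdot\Delta X(t)\,d\theta\\
&\quad+\int_0^1{\cal H}_u\big(t,X^\epss+\theta\Delta X,u^\epss+\theta\Delta u,p^{\epss,\rho},q^{\epss,\rho},r^{\epss,\rho},\lambda^{\epss,\rho}\big)\cdot\Delta u(t)\,d\theta.
\end{align*}
Dividing by $\rho$, the remainder line is $O(1)$ thanks to $\|\Delta X\|_{{\cal M}^2_{\mathscr F}(0,T)}=O(\rho)$ (Lemma \ref{lem:3.2}) and the uniform bound on ${\cal H}_x$, while its bracketed factor tends to zero because both evaluation points collapse to $(X^\epss,u^\epss)$; hence that line vanishes in the limit. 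In the main line, $\Delta u/\rho=v-u^\epss$, and I would pass to the limit by dominated convergence, using continuity and uniform boundedness of $b_u,g_u,\sigma_u,l_u$, the convergences $\Delta X\to 0$ and $\Delta u\to 0$, the adjoint convergence of Lemma \ref{lem:4.1}, and $\lambda^{\epss,\rho}\to\lambda^\epss$ from \eqref{eq:5.10}, thereby obtaining $\mathbb E[\int_0^T({\cal H}_u^\epss(t),v(t)-u^\epss(t))_U\,dt]$.

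The two terminal groups are pure second-order remainders: writing $\phi(X^{\epss,\rho}(T))-\phi(X^\epss(T))=\int_0^1\phi_x(X^\epss(T)+\theta\Delta X(T))\cdot\Delta X(T)\,d\theta$ and subtracting $\phi_x(X^{\epss,\rho}(T))\cdot\Delta X(T)$ leaves a bracketed factor that tends to zero, multiplied by $\Delta X(T)=O(\rho)$; so after division by $\rho$, using the boundedness of $\mu^{\epss,\rho}$ and the linear growth of $\phi_x$, this group vanishes, and the $\Phi$-group is identical. Collecting the three limits establishes both the G\^ateaux differentiability of $J^\epss$ at $u^\epss(\cdot)$ in the direction $v(\cdot)-u^\epss(\cdot)$ and the asserted formula.

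For the lower bound I would divide the Ekeland inequality \eqref{eq:3.9}, namely $J^\epss(u^{\epss,\rho}(\cdot))-J^\epss(u^\epss(\cdot))\geq -C\epss^{\frac12}\rho$, by $\rho>0$ and let $\rho\downarrow 0$, which yields $\mathbb E[\int_0^T({\cal H}_u^\epss(t),v(t)-u^\epss(t))_U\,dt]\geq -C\epss^{\frac12}$. The hard part will be the joint limit in the main line of the running term, where the state, the control, and all three adjoint processes move with $\rho$ simultaneously; I expect to handle it by splitting each summand of ${\cal H}_u$ into a piece in which only the derivative argument varies (dominated convergence, using boundedness of $b_u,g_u,\sigma_u$) and a piece carrying the adjoint increment (Cauchy--Schwarz with the $M^2$-convergence of Lemma \ref{lem:4.1}), and by invoking the subsequence principle to upgrade a.e.\ convergence along subsequences to convergence of the whole family.
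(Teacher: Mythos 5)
Your proposal is correct and follows essentially the same route as the paper: both start from the representation of $J^\epss(u^{\epss,\rho}(\cdot))-J^\epss(u^\epss(\cdot))$ in Lemma \ref{lem:4.3}, isolate the first-order term ${\cal H}_u^{\epss,\rho}(t)\cdot(u^{\epss,\rho}(t)-u^\epss(t))$, show the remaining running and terminal groups are $o(\rho)$ via Taylor expansion together with the order estimates of Lemma \ref{lem:3.2}, pass to the limit using the adjoint convergence of Lemma \ref{lem:4.1}, \eqref{eq:5.10} and dominated convergence, and obtain the $-C\epss^{1/2}$ bound by dividing the Ekeland inequality \eqref{eq:3.9} by $\rho$. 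Your write-up simply makes explicit the integral-remainder form of the Taylor expansion that the paper invokes without detail.
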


\begin{proof}
By \eqref{eq:6.15}, we have
\begin{eqnarray}\label{eq:6.23}
&& J^\epss( u^\epss(\cdot)+\rho(v(\cdot)- u^\epss(\cdot)))-J^\epss(u^\epss(\cdot)) = I + II ,
\end{eqnarray}
where
\begin{eqnarray*}
I &\triangleq& {\mathbb E} \bigg [ \int_0^T \big\{ {\cal H}^{\epss, \rho} (t)
- {\cal H} (t,{X}^{\epss}(t),
{u}^{\epss}(t),{p}^{\epss,\rho}(t),q^{\epss, \rho}(t),r^{\epss, \rho}(t,\cdot),{\lambda}^{\epss, \rho}) \nonumber \\
&& -{\cal H}_x^{\epss, \rho}(t) \cdot(X^{\epss,\rho}(t)-X^{\epss}(t))
 -{\cal H}_u^{\epss, \rho}(t) \cdot(u^{\epss,\rho}(t)-u^{\epss}(t))\big\} dt \bigg ] \nonumber \\
&& + \mu^{\varepsilon,\rho} {\mathbb E} \big [ \phi^{\epss,\rho}(X^{\epss,\rho}(T))
-\phi^{\epss}(X^{\epss}(T))
- \phi_x(X^{\epss,\rho}(T))
\cdot(X^{\epss,\rho}(T)-X^{\epss}(T)) \big ] \nonumber \\
&& + {\lambda}^{\varepsilon,\rho} {\mathbb E} \big [ \Phi^{\epss,\rho}(X^{\epss,\rho}(T))
-\Phi^{\epss}(X^{\epss}(T))
- \Phi_x(X^{\epss,\rho}(T))\cdot(X^{\epss, \rho}(T)-y^{\epss}(T)) \big ].
\end{eqnarray*}
and
\begin{eqnarray*}
II &\triangleq& {\mathbb E} \bigg [ \int_0^T {\cal H}_u^{\epss, \rho}(t) \cdot(u^{\epss, \rho}(t)-u^{\epss}(t)) d t \bigg ]
\end{eqnarray*}
Recalling Lemma \ref{lem:3.2} and Assumption \ref{ass:2.5} and using the Taylor Expansion for $H$ and the
dominated convergence theorem, we obtain
\begin{eqnarray}\label{eq:4.14}
I = o(\rho).
\end{eqnarray}
On the other hand, similarly, using Lemma \ref{lem:4.1}, Lemma \ref{lem:3.2} and Assumption \ref{ass:2.5} and using the Taylor Expansion for $H$ and the
dominated convergence theorem, we deduce
\begin{eqnarray}\label{eq:4.19}
II =\rho{\mathbb E} \bigg [ \int_0^T( {\cal H}_u^\epss(t), v(t)-{u}^\epss(t))_Udt \bigg ]+o(\rho)
\end{eqnarray}
Hence, putting \eqref{eq:4.14} and
 \eqref{eq:4.19}into \eqref{eq:6.23} and combing
\eqref{eq:3.9}, by  the
dominated convergence theorem we conclude that
\begin{eqnarray}
\frac{d}{d\rho}J^\epss( u^\epss(\cdot)+\rho(v(\cdot)- u^\epss(\cdot)))|_{\rho=0}
&=& \lim_{\rho\rightarrow 0} \frac{J^\epss(u^\epss(\cdot)+\rho(v(\cdot)- u^\epss(\cdot)))-J^\epss(u^\epss(\cdot))}{\rho} \nonumber \\
&=& {\mathbb E} \bigg [ \int_0^T( {\cal H}^\epss_u(t), v(t)-{u}^\epss(t))_U d t \bigg ]
\geq - C \varepsilon^{\frac{1}{2}} .
\end{eqnarray}
\end{proof}

Now we are ready to give the necessary condition of optimality for the existence of the optimal control of Problem \ref{pro:2.1}.

\begin{theorem}
Let Assumptions \ref{ass:2.5} be satisfied. Let $(\bar{u}(\cdot); \bar{X}(\cdot))$ be an optimal pair of Problem \ref{pro:2.1}.
Then there exist a $({\lambda},\mu)$ satisfying $|{\lambda}|^2+|\mu|^2=1$ such that
\begin{eqnarray}\label{eq:4.20}
( {\cal H}_u(t,{\bar X}(t),{\bar u}(t),{\bar p}(t),\bar q(t),\bar r(t,\cdot), {\lambda}), u-{\bar u}(t))_U\geq 0, \quad \forall u\in U_{ad}, \quad \mbox{a.e.} \ \mbox{a.s.}.
\end{eqnarray}
Here $\{ {\bar p}(t), \bar q(t),
 \bar r(t,\cdot)),0 \leq t \leq T \}$ be  the solution of the corresponding adjoint equation \eqref{eq:6.10} associated
with $(\bar{u}(\cdot); \bar{X}(\cdot))$.
\end{theorem}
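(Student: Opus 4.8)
The plan is to derive the constrained maximum principle \eqref{eq:4.20} by letting the Ekeland parameter $\varepsilon\to0$ in the near-optimality inequality already established for the penalized problem in Theorem \ref{them:3.1}, and then passing from the resulting integral inequality to a pointwise one by a localization argument.

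First I would record the starting point. Applied to the Ekeland near-minimizer $u^\varepsilon(\cdot)$ from \eqref{eq:5.1}, Theorem \ref{them:3.1} yields, for every admissible $v(\cdot)\in{\cal A}$ and every $\varepsilon>0$,
\[
{\mathbb E}\bigg[\int_0^T\big({\cal H}_u^\varepsilon(t),\,v(t)-u^\varepsilon(t)\big)_U\,dt\bigg]\geq -C\varepsilon^{\frac12},
\]
with $(p^\varepsilon,q^\varepsilon,r^\varepsilon)$ the adjoint triple solving \eqref{eq:4.5}. The convergences I would invoke are all in place: \eqref{eq:5.1} gives $d(u^\varepsilon(\cdot),\bar u(\cdot))\leq\varepsilon^{\frac12}$, hence $u^\varepsilon\to\bar u$ in $M^2_{\mathbb F}(0,T;U)$; Lemma \ref{lem:3.2} gives $\|X^\varepsilon(\cdot)-\bar X(\cdot)\|^2_{{\cal M}^2_{\mathbb F}(0,T)}=O(\varepsilon^2)$; Lemma \ref{lem:4.1} gives $(p^\varepsilon,q^\varepsilon,r^\varepsilon)\to(\bar p,\bar q,\bar r)$, where $(\bar p,\bar q,\bar r)$ solves \eqref{eq:4.6}; and \eqref{eq:5.13}--\eqref{eq:5.14} give $(\lambda^\varepsilon,\mu^\varepsilon)\to(\lambda,\mu)$ with $|\lambda|^2+|\mu|^2=1$.

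The main obstacle is to pass to the limit inside the integral, that is, to show ${\cal H}_u^\varepsilon\to\bar{\cal H}_u$ strongly in $L^2(\Omega\times[0,T];U)$, where $\bar{\cal H}_u(t)={\cal H}_u(t,\bar X(t),\bar u(t),\bar p(t),\bar q(t),\bar r(t,\cdot),\lambda)$; pairing this against $v-u^\varepsilon$ (which converges to $v-\bar u$ in the same space) and integrating then gives the convergence of the left-hand side. Since ${\cal H}_u=(b_u)^{*}p+(g_u)^{*}q+\int_E(\sigma_u)^{*}r\,\nu(de)+\lambda l_u$, each term is a product of a uniformly bounded G\^ateaux derivative (Assumption \ref{ass:2.5}(ii)) with an adjoint variable; for a representative term I would split
\[
(b_u^\varepsilon)^{*}p^\varepsilon-(\bar b_u)^{*}\bar p=(b_u^\varepsilon)^{*}(p^\varepsilon-\bar p)+\big((b_u^\varepsilon)^{*}-(\bar b_u)^{*}\big)\bar p,
\]
controlling the first term by the uniform bound on $b_u$ and the $L^2$-convergence $p^\varepsilon\to\bar p$ of Lemma \ref{lem:4.1}, and the second by the continuity of $b_u$ along $(X^\varepsilon,u^\varepsilon)\to(\bar X,\bar u)$ together with the dominated convergence theorem; the $g$, $\sigma$ and $\lambda l_u$ terms are treated identically, the last also using $\lambda^\varepsilon\to\lambda$. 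Since the right-hand side $-C\varepsilon^{\frac12}\to0$, I obtain
\[
{\mathbb E}\bigg[\int_0^T\big(\bar{\cal H}_u(t),\,v(t)-\bar u(t)\big)_U\,dt\bigg]\geq 0,\qquad\forall\,v(\cdot)\in{\cal A}.
\]

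Finally I would localize. Fixing any $u\in U_{ad}$ and any predictable set $F\subset[0,T]\times\Omega$, the process $v=u\,\mathbf{1}_F+\bar u\,\mathbf{1}_{F^{c}}$ is admissible by convexity and closedness of $U_{ad}$, and substituting it reduces the last inequality to ${\mathbb E}\big[\int_0^T\mathbf{1}_F\,(\bar{\cal H}_u(t),u-\bar u(t))_U\,dt\big]\geq0$. As $F$ is arbitrary, $(\bar{\cal H}_u(t),u-\bar u(t))_U\geq0$ for a.e.\ $t$, a.s.; letting $u$ run through a countable dense subset of $U_{ad}$ and using the continuity of $u\mapsto(\bar{\cal H}_u(t),u-\bar u(t))_U$ upgrades this to all $u\in U_{ad}$, which is exactly \eqref{eq:4.20}, the normalization $|\lambda|^2+|\mu|^2=1$ being the one recorded in \eqref{eq:5.14}.
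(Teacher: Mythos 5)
Your proposal is correct and follows essentially the same route as the paper: apply the near-optimality inequality of Theorem \ref{them:3.1} at $u^{\varepsilon}(\cdot)$, pass to the limit $\varepsilon\to 0$ using Lemma \ref{lem:4.1}, Lemma \ref{lem:3.2}, \eqref{eq:5.13}--\eqref{eq:5.14} and dominated convergence, and then deduce the pointwise inequality from the integral one. You merely fill in two steps the paper leaves implicit (the $L^2$-convergence of ${\cal H}_u^{\varepsilon}$ via the product splitting, and the localization with $v=u\,\mathbf{1}_F+\bar u\,\mathbf{1}_{F^{c}}$), both of which are correct and worth making explicit.
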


\begin{proof}
From \eqref{eq:5.14}, there exists a pair $({\lambda},\mu)$ satisfying $|{\lambda}|^2+|\mu|^2=1$. Note that
\begin{eqnarray}\label{eq:4.22}
\lim_{\epss \rightarrow 0} d\left( u^{\varepsilon }\left( \cdot \right) ,{\bar u}\left( \cdot \right) \right) =0
\end{eqnarray}
From \ref{lem:4.1}, Lemma \ref{lem:3.2} and Assumption \ref{ass:2.5}  and \eqref{eq:5.13}, sending $\varepsilon$ to $0$ on the both sides of  \eqref{eq:4.16} and using
the dominated convergence theorem, we conclude that
\begin{eqnarray}
{\mathbb E} \bigg [ \int_0^T( {\cal H}_u(t,{\bar X}(t),{\bar u}(t),{\bar p}(t),\bar q(t),\bar r(t,\cdot), {\lambda}), v(t)-{\bar u}(t))_Udt \bigg ] \geq 0, \quad \forall v (\cdot) \in \cal A,
\end{eqnarray}
which implies that
\eqref{eq:4.20} holds.
This completes the proof.
\end{proof}

\bibliographystyle{amsplain}

\vspace{1mm}

\section*{Appendix}

\setcounter{equation}{0}
\renewcommand{\theequation}{A.\arabic{equation}}
\renewcommand{\thedefinition}{A.\arabic{definition}}
\renewcommand{\thetheorem}{A.\arabic{theorem}}
\renewcommand{\thelemma}{A.\arabic{lemma}}
\renewcommand{\theassumption}{A.\arabic{assumption}}

In this appendix, we introduce some preliminary results of SEEs and BSEEs, including
existence, uniqueness and continuous dependence theorems.

Consider a SEE in the Gelfand triple $(V, H, V^*)$:
\begin{eqnarray} \label{eq:3.1}
  \left\{
  \begin{aligned}
   d X (t) = & \ [ A (t) X (t) + b ( t, X (t)) ] d t
+ [B(t)X(t)+g( t, { X (t)}) ]d W(t)
 \\&\quad +\int_E \sigma (t,e, X(t-))\tilde \mu(de,dt),  \\
X (0) = & \  x \in H , \quad t \in [ 0, T ],
  \end{aligned}
  \right.
\end{eqnarray}
where $A,B,b,g$ and  $\sigma $ are given random mappings
which satisfy the following
standard  assumptions.

\begin{assumption} \label{ass:3.1}
  The operator processes $A:[0,T]\times \Omega \longrightarrow {\mathscr L} (V, V^*)$ and $B
  : [0,T]\times \Omega \longrightarrow {\mathscr L} (V, H)$
  are weakly predictable; i.e.,
  $ \langle A(\cdot)x, y \rangle$ and $(B(\cdot)x, y)_H$
  are both predictable process for every $x, y\in V, $
  and satisfy the coercive condition, i.e., there exist
  some constants  $ C, \alpha>0$ and $\lambda$ such that for any $x\in V$ and  each $(t,\omega)\in [0,T]\times \Omega,$
    \begin{eqnarray}
    \begin{split}
     - \langle A(t)x, x \rangle +\lambda ||x||_H^2 \geq \alpha
      ||x||_V^2+||Bx||_H^2{\color{blue},}
    \end{split}
  \end{eqnarray}
  and  \begin{eqnarray}
\sup_{( t, \omega ) \in [0, T] \times \Omega} \| A ( t,\omega ) \|_{{\mathscr L} ( V, V^* )}
 +\sup_{( t, \omega ) \in [0, T] \times \Omega} \| B ( t,\omega ) \|_{{\mathscr L} ( V, H )} \leq C \ .
\end{eqnarray}
\end{assumption}

\begin{assumption} \label{ass:3.2}
   The mappings $b:[0,T]\times  \Omega\times H  \longrightarrow H$ and $g:[0,T]\times\Omega\times H  \longrightarrow H$  are both $\mathscr P\times
   \mathscr B(H)/\mathscr B(H) $-measurable
   such that $b(\cdot,0), g(\cdot,0)\in M_{\mathscr{F}}^2(0,T;H)
   $; the mapping $\sigma:[0,T]\times  \Omega \times
   E\times H  \longrightarrow H$ is $\mathscr P\times\mathscr B(E)  \times
   \mathscr B(H)/\mathscr B(H) $-measurable
   such that $\sigma(\cdot,\cdot, 0)\in {M}_\mathscr{F}^{\nu,2}{([0,T]\times  E; H)}$.
   And there exists a constant $C$  such that
   for all $x, \bar x\in V$ and a.s.$(t,\omega)\in [0,T]\times \Omega,$
   \begin{eqnarray}
     \begin{split}
       ||b(t,x)-b(t,x)||_H+ ||g(t,x)-g(t,x)||_H
       +||\sigma(t,\cdot, x)-\sigma(t,\cdot,x)||
       _{M^{\nu,2}( E; H)} \leq  C||x-\bar x||_H.
     \end{split}
   \end{eqnarray}

\end{assumption}

\begin{definition}
\label{defn:c1}
A $V$-valued, $\{{\mathscr F}_t\}_{0\leq t\leq T}$-adapted process $X(\cdot)$ is said to be a solution to the
SEE \eqref{eq:3.1}, if $X (\cdot) \in { M}_{
\mathscr F}^2 ( 0, T; V )$ such that for every $\phi \in V$
and a.e. $( t, \omega ) \in [0, T ] \times \Omega$, it holds that
\begin{eqnarray}
\left\{
\begin{aligned}
( X (t), \phi )_H =& \ ( x, \phi )_H + \int_0^t \left < A (s) X (s), \phi \right > d s
+\int_0^t ( b ( s, X (s){\color{blue})}, \phi )_H d s \\
& + \int_0^t ( B(s)X(s)+ g ( s, X (s) ), \phi )_H d W (s)
\\& + \int_0^t \int_{E} (\sigma ( s,e, X (s-) ), \phi )_H d \tilde \mu (de,ds), \quad t \in [ 0, T ] , \\
X(0) =& \ x\in H  ,
\end{aligned}
\right.
\end{eqnarray}
or alternatively, $X (\cdot)$ satisfies the following It\^o's equation in $V^*$:
\begin{eqnarray}
\left\{
\begin{aligned}
X (t)=& \  x+ \int_0^t  A (s) X (s)d s
+\int_0^t  b ( s, X (s))d s + \int_0^t
[B(s)X(s)+ g ( s, X (s) )] d W (s)
\\& + \int_0^t \int_{E} \sigma ( s,e, X (s-) ) d \tilde \mu (de,ds), \quad t \in [ 0, T ] , \\
X(t) =& \ x\in H .
\end{aligned}
\right.
\end{eqnarray}
\end{definition}

Now we state our main result.

\begin{lemma} \label{thm:3.1}
  Let Assumptions \ref{ass:3.1}-\ref{ass:3.2} be
  satisfied by any given coefficients
  $(A,B,b,g,\sigma)$ of the SEE \eqref{eq:3.1}. Then for any initial
    value $X(0)=x,$ the
    SEE \eqref{eq:3.1} has a unique
  solution $X(\cdot)\in M_{\mathscr{F}}^2(0,T;V) \bigcap  S_{\mathscr{F}}^2(0,T;H).$
\end{lemma}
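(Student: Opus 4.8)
The plan is to first solve the \emph{linear} version of \eqref{eq:3.1} with prescribed forcing terms and then recover the full nonlinear equation by a contraction (Picard) argument, so that the Lipschitz nonlinearities $b,g,\sigma$ are handled separately from the unbounded operator $A$. Concretely, for a fixed process $Y(\cdot)$ in ${\cal M}^2_{\mathscr F}(0,T)=M_{\mathscr F}^2(0,T;V)\cap S_{\mathscr F}^2(0,T;H)$, I would substitute $Y$ into the Lipschitz coefficients and consider the linear SEE with drift $A(t)X(t)+b(t,Y(t))$, diffusion $B(t)X(t)+g(t,Y(t))$ and jump coefficient $\sigma(t,e,Y(t-))$. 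Since $b(\cdot,0),g(\cdot,0)\in M^2_{\mathscr F}(0,T;H)$, $\sigma(\cdot,\cdot,0)\in M_{\mathscr F}^{\nu,2}([0,T]\times E;H)$, and $b,g,\sigma$ are Lipschitz by Assumption \ref{ass:3.2}, the substituted forcing terms lie in the correct spaces, so it suffices to construct and estimate the solution of a linear coercive SEE with jumps.

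For the linear problem I would use a Galerkin approximation: choose a basis $\{e_i\}_{i\ge 1}\subset V$, project the equation onto $V_n=\mathrm{span}\{e_1,\dots,e_n\}$, and solve the resulting finite-dimensional SDE with jumps, whose well-posedness is classical. The heart of the argument is the energy estimate. Applying the It\^o formula to $\|X_n(t)\|_H^2$ and taking expectations, the drift produces $2\langle A(t)X_n(t),X_n(t)\rangle+2(b,X_n)_H$, the Brownian part contributes $\|B(t)X_n(t)+g\|_H^2$, and the compensated Poisson integral contributes $\int_E\|\sigma\|_H^2\nu(de)$. The coercivity inequality in Assumption \ref{ass:3.1}, namely $-\langle A(t)x,x\rangle+\lambda\|x\|_H^2\ge \alpha\|x\|_V^2+\|Bx\|_H^2$, is tailored precisely to absorb the $\|B(t)X_n\|_H^2$ coming from the diffusion, leaving a good $\alpha\|X_n\|_V^2$ term. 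Together with Young's inequality, the boundedness of $A,B$, the Burkholder--Davis--Gundy inequality and Gronwall's lemma, this yields a bound on $\|X_n\|_{{\cal M}^2_{\mathscr F}(0,T)}^2$ uniform in $n$. I would then extract a subsequence with $X_n\rightharpoonup X$ weakly in $M^2_{\mathscr F}(0,T;V)$ and pass to the limit; because $A,B$ are bounded linear operators and the forcing is fixed, weak convergence is enough to identify the limit equation and \emph{no} Minty-type monotonicity trick is required. The membership $X\in S^2_{\mathscr F}(0,T;H)$ and the c\`adl\`ag modification follow from the uniform estimate and the semimartingale structure supplied by the It\^o formula.

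With the linear solution map in hand, I would run the Picard iteration $X^{0}\equiv x$ and let $X^{n+1}$ solve the linear equation whose forcing comes from $X^n$. Applying the It\^o formula to $\|X^{n+1}(t)-X^n(t)\|_H^2$ and using coercivity exactly as above, the Lipschitz bounds of Assumption \ref{ass:3.2} give
\[
\mathbb E\Big[\sup_{0\le s\le t}\|X^{n+1}(s)-X^n(s)\|_H^2\Big]+\mathbb E\int_0^t\|X^{n+1}-X^n\|_V^2\,ds
\le K\int_0^t \mathbb E\|X^n-X^{n-1}\|_H^2\,ds .
\]
Iterating this inequality (or, equivalently, using the equivalent weighted norm $\mathbb E\int_0^T e^{-\beta t}\|\cdot\|_H^2\,dt$ with $\beta$ large) shows that $\{X^n\}$ is Cauchy in ${\cal M}^2_{\mathscr F}(0,T)$, and its limit solves \eqref{eq:3.1}. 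Uniqueness follows from the same computation applied to the difference of two solutions, which yields $\mathbb E\sup_{0\le s\le t}\|X-\tilde X\|_H^2\le K\int_0^t\mathbb E\|X-\tilde X\|_H^2\,ds$ and hence $X=\tilde X$ by Gronwall.

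The step I expect to be the main obstacle is the rigorous It\^o formula for $\|X(t)\|_H^2$ in the Gelfand triple in the presence of jumps. Since a solution belongs to $V$ only for almost every $t$ while the dynamics is written in $V^*$, one must justify that $X$ has a c\`adl\`ag $H$-valued modification and that the pairing $\langle A(t)X(t),X(t)\rangle$ is integrable, and one must correctly identify the jump contribution: the compensated-measure integral produces both the predictable term $\int_E\|\sigma\|_H^2\nu(de)\,dt$ and a martingale term, with the second-order correction for the jumps accounted for without introducing spurious first-order drift. Controlling the jump stochastic integrals through the Burkholder--Davis--Gundy inequality in the martingale-measure setting, uniformly along the Galerkin approximation, is the most delicate part; the remaining estimates are routine applications of coercivity, Young's inequality and Gronwall's lemma.
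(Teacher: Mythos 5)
Your outline is correct and is the standard variational (Krylov--Rozovskii/Pardoux-type) argument: reduce to a linear coercive SEE with fixed forcing, solve it by Galerkin approximation with the energy estimate in which the coercivity condition absorbs $\|BX\|_H^2$, and then handle the Lipschitz nonlinearities by Picard iteration, with uniqueness from Gronwall. Note, however, that the paper itself gives no proof of Lemma \ref{thm:3.1}: it is quoted as a preliminary result in the appendix (the proof being deferred to the cited reference of Tang and Meng), and the only hint of strategy given there --- ``we first show the continuous dependence estimate'' of Lemma \ref{thm:3.2}, then conclude by a fixed-point argument --- is essentially the route you take. The one step you rightly flag as delicate, the It\^o formula for $\|X(t)\|_H^2$ in the Gelfand triple with jumps and the existence of a c\`adl\`ag $H$-valued modification, is indeed the technical core and is exactly what the cited existence theory supplies.
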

To prove this  theorem, we first show the following
result on  the continuous dependence of  the solution to  the  SEE \eqref{eq:3.1}.

 \begin{lemma} \label{thm:3.2}
  Let  $ X(\cdot)$  be a solution to
  the  SEE   \eqref{eq:3.1}
   with the initial value $X(0)=x$ and
    the coefficients $(A,B, b,g,\sigma)$
   which satisfy Assumptions \ref{ass:3.1}-\ref{ass:3.2}. Then
  the following estimate holds:
\begin{eqnarray}\label{eq:3.4}
\begin{split}
&{\mathbb E} \bigg [ \sup_{0 \leq t \leq T} \| X (t) \|_H^2 \bigg
]
+ {\mathbb E} \bigg [ \int_0^T \| X (t) \|_V^2 d t \bigg ] \\
& \leq K \bigg \{ ||x||_H^2 + {\mathbb E}
\bigg [ \int_0^T \| b ( t, 0) \|_H^2 d t \bigg ] + {\mathbb E}
\bigg [ \int_0^T \| g ( t, 0) \|_H^2 d t \bigg ]
+ {\mathbb E} \bigg [ \int_{0}^T\int_E \| \sigma (t,e,0) \|^2_H  \nu(de)d t \bigg ] \bigg \}.
\end{split}
\end{eqnarray}
Furthermore, suppose that   $ \bar X(\cdot)$  is a solution to
  the  SEE   \eqref{eq:3.1}
   with the initial value $\bar X(0)=\bar x \in H$ and the coefficients $(A,B,  \bar b, \bar g,\bar\sigma)$
   satisfying Assumptions \ref{ass:3.1}-\ref{ass:3.2},
   then we have

   \begin{eqnarray}\label{eq:3.5}
&& {\mathbb E} \bigg [ \sup_{0 \leq t \leq T} \| X (t) - {\bar X} (t) \|_H^2 \bigg ]
+ {\mathbb E} \bigg [ \int_0^T \| X (t) - {\bar X} (t) \|_V^2 d t \bigg ] \nonumber \\
&& \leq K \bigg \{  \|x-\bar x\|^2_H
+ {\mathbb E} \bigg [ \int_0^T \| b ( t, {\bar X} (t) )
- {\bar b} ( t, {\bar X} (t) ) \|_H^2 d t \bigg ] \\
&&+ {\mathbb E} \bigg [ \int_0^T \| g ( t, {\bar X} (t) ) - {\bar g}( t, {\bar X} (t) ) \|_H^2 d t \bigg ]
+ {\mathbb E} \bigg [ \int_0^T \int_{E}\| \sigma ( t, e, {\bar X} (t)) - {\bar \sigma}( t, e, {\bar X} (t) ) \|_H^2 \nu(de)d t \bigg ]   \bigg \} .\nonumber
\end{eqnarray}
 \end{lemma}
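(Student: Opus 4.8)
The plan is to derive both estimates from a single energy identity, namely It\^o's formula applied to $\|X(t)\|_H^2$ along the Gelfand triple $V \subset H \subset V^*$, exploiting the fact that the coercivity condition in Assumption \ref{ass:3.1} has been deliberately strengthened by the extra $\|Bx\|_H^2$ so as to absorb the diffusion operator. First I would apply the It\^o formula for the square of the $H$-norm, accounting for the jump term, to obtain for each $t$
\begin{eqnarray*}
\|X(t)\|_H^2 &=& \|x\|_H^2 + \int_0^t \big[ 2\langle A(s)X(s), X(s)\rangle + 2(b(s,X(s)),X(s))_H + \|B(s)X(s)+g(s,X(s))\|_H^2 \big] ds \\
&& + \int_0^t \int_E \|\sigma(s,e,X(s-))\|_H^2 \nu(de)ds + M(t),
\end{eqnarray*}
where $M(t)$ collects the Brownian and compensated-Poisson martingale terms, which vanish in expectation. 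Here the compensator of the jump part contributes exactly $\int_E\|\sigma\|_H^2\nu(de)$, since $\|X+\sigma\|_H^2-\|X\|_H^2-2(X,\sigma)_H=\|\sigma\|_H^2$.

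The next step is to invoke coercivity: writing $2\langle A X, X\rangle \leq 2\lambda\|X\|_H^2 - 2\alpha\|X\|_V^2 - 2\|BX\|_H^2$ and expanding $\|BX + g\|_H^2 = \|BX\|_H^2 + 2(BX,g)_H + \|g\|_H^2$, the crucial cancellation is that the $-2\|BX\|_H^2$ from coercivity, combined with $2(BX,g)_H \leq \|BX\|_H^2 + \|g\|_H^2$ via Young, eliminates the unbounded $\|BX\|_H^2$ contribution entirely and leaves a strictly negative multiple of $\|X\|_V^2$. Combining this with the linear-growth bounds $\|b(t,X)\|_H \leq \|b(t,0)\|_H + C\|X\|_H$ (and likewise for $g,\sigma$) coming from the Lipschitz hypothesis in Assumption \ref{ass:3.2}, and applying Young's inequality to the cross terms, I would reach after taking expectations
\begin{eqnarray*}
\mathbb{E}\|X(t)\|_H^2 + 2\alpha\, \mathbb{E}\int_0^t \|X(s)\|_V^2 ds \leq \|x\|_H^2 + C\int_0^t \mathbb{E}\|X(s)\|_H^2 ds + C\,\Theta,
\end{eqnarray*}
where $\Theta$ abbreviates the three data integrals $\mathbb{E}\int_0^T(\|b(t,0)\|_H^2 + \|g(t,0)\|_H^2 + \int_E\|\sigma(t,e,0)\|_H^2\nu(de))dt$. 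Gronwall's inequality then bounds $\mathbb{E}\|X(t)\|_H^2$ uniformly in $t$, and feeding this back into the same estimate yields the $\mathbb{E}\int_0^T\|X\|_V^2 dt$ bound. To upgrade the pointwise bound to $\mathbb{E}[\sup_t\|X(t)\|_H^2]$, I would return to the energy identity, take the supremum over $t$ before taking expectation, and control the martingale terms with the Burkholder--Davis--Gundy inequality; the Brownian part is dominated by $\tfrac12\mathbb{E}[\sup_t\|X(t)\|_H^2] + C\,\mathbb{E}\int_0^T\|BX+g\|_H^2 ds$ after Young, the jump martingale is treated analogously, and the $\tfrac12\mathbb{E}[\sup_t\|X\|_H^2]$ term is absorbed into the left-hand side.

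For the continuous-dependence estimate \eqref{eq:3.5} I would set $Y := X - \bar X$, which solves an SEE of the same type driven by the same $(A,B)$ with free terms $b(t,X)-\bar b(t,\bar X)$, $g(t,X)-\bar g(t,\bar X)$ and $\sigma(t,e,X)-\bar\sigma(t,e,\bar X)$. Splitting each difference, e.g. $b(t,X)-\bar b(t,\bar X) = [b(t,X)-b(t,\bar X)] + [b(t,\bar X)-\bar b(t,\bar X)]$, the first bracket is controlled by $C\|Y\|_H$ through the Lipschitz condition and is absorbed by the Gronwall step exactly as above, while the second bracket is precisely the prescribed data-difference term on the right of \eqref{eq:3.5}. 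Running the identical It\^o/coercivity/Gronwall/BDG argument on $Y$ then gives the claim. I expect the main technical obstacle to be the rigorous justification of the It\^o formula for $\|X(t)\|_H^2$ in the Gelfand-triple setting in the presence of jumps, since $X$ belongs to $V$ only in the $M_{\mathscr F}^2(0,T;V)$ sense while the norm is taken in the pivot space $H$, together with the careful BDG estimate of the compensated-Poisson integral needed for the supremum bound; by contrast the algebraic cancellation of $\|BX\|_H^2$ through coercivity, though it is the structural heart of the argument, is otherwise routine.
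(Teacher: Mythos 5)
The paper itself offers no proof of this lemma: it is stated in the appendix as an imported preliminary result (the existence/uniqueness and a priori estimates are taken from \cite{Tangmeng2016} and \cite{Me}), so there is no in-paper argument to compare yours against. Your proposal is, however, the standard and correct variational-framework proof, and every structural step checks out: the It\^o formula for $\|X(t)\|_H^2$ in the Gelfand triple with the jump compensator contributing exactly $\int_E\|\sigma\|_H^2\nu(de)\,dt$; the cancellation $2\langle AX,X\rangle+\|BX+g\|_H^2\leq 2\lambda\|X\|_H^2-2\alpha\|X\|_V^2+2\|g\|_H^2$, which is precisely why the coercivity hypothesis carries the extra $\|Bx\|_H^2$; Gronwall for the pointwise bound and back-substitution for the $V$-norm integral; BDG with the $\tfrac12$-absorption for the supremum (noting that the resulting $\mathbb{E}\int_0^T\|BX\|_H^2\,dt$ is controlled by the already-established $V$-norm bound and the uniform boundedness of $B$); and the splitting $b(t,X)-\bar b(t,\bar X)=[b(t,X)-b(t,\bar X)]+[b(t,\bar X)-\bar b(t,\bar X)]$ for the continuous-dependence estimate, which correctly produces the data differences evaluated along $\bar X$. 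Two minor points you should make explicit in a full write-up: the Gronwall step needs a localization by stopping times (one does not know a priori that $t\mapsto\mathbb{E}\|X(t)\|_H^2$ is finite and measurable before the estimate is closed), and the It\^o formula for the square of the $H$-norm of a $V$-valued c\`adl\`ag semimartingale in the Gelfand-triple setting with jumps is exactly the nontrivial ingredient you flag; it must be quoted from (or proved as in) the Krylov--Rozovskii/Gy\"ongy framework extended to Poisson random measures, as in the cited references.
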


Next we consider a BSEE in the Gelfand triple $(V, H, V^*)$:
\begin{eqnarray}\label{eq:2.7}
\left\{
\begin{aligned}
dY(t)=& \ [ A^*(t)Y(t)+B^*(t)Z(t)
+f(t, Y(t), Z(t), R(t,\cdot)) ] dt +Z(t)dW(t)+\int_{E}R(t,e)\tilde\mu(dt,de),\\
Y(T)=& \ \xi,
\end{aligned}
\right.
\end{eqnarray}
where  $(A^*,B^*,f, \xi)$  are given random mappings. Here $A^*$ and $B^*$
are the adjoint operators of $A$ and
$B$, respectively. Furthermore, we assume that the coefficients $(A^*, B^*, f, \xi)$
satisfy the following conditions:

\begin{assumption} \label{ass:3.1}
  The operator processes $A^*:[0,T]\times \Omega \longrightarrow {\mathscr L} (V, V^*)$ and $B^*
  : [0,T]\times \Omega \longrightarrow {\mathscr L} (V, H)$
  are weakly predictable; i.e.,
  $ \langle A^*(\cdot)x, y \rangle$ and $(B^*(\cdot)x, y)_H$
  are both predictable process for every $x, y\in V, $
  and satisfy the coercive condition, i.e.,  there exist
  some constants  $ C, \alpha>0$ and $\lambda$ such that for any $x\in V$ and  each $(t,\omega)\in [0,T]\times \Omega,$
    \begin{eqnarray}
    \begin{split}
     - \langle A^*(t)x, x \rangle +\lambda ||x||_H \geq \alpha
      ||x||_V+||B^*x||_H{\color{blue},}
    \end{split}
  \end{eqnarray}
  and  \begin{eqnarray}
\sup_{( t, \omega ) \in [0, T] \times \Omega} \| A^* ( t,\omega ) \|_{{\mathscr L} ( V, V^* )}
 +\sup_{( t, \omega ) \in [0, T] \times \Omega} \| B^* ( t,\omega ) \|_{{\mathscr L} ( V, H )} \leq C \ .
\end{eqnarray}
\end{assumption}

\begin{assumption} \label{ass:3.2}
   The mapping $\xi: \Omega \rightarrow H$ is ${\cal F}_T$-measurable such that
   $\xi\in L^2(\Omega,{\mathscr{F}_T},
   \mathbb P;H).$
    The mappings $f:[0,T]\times  \Omega\times H \times H\times
    M^{\nu,2}( E; H) \longrightarrow $   are both $\mathscr P\times
   \mathscr B(H)\times \mathscr B(H)
   \times
   \mathscr B(M^{\nu,2}( E; H))/\mathscr B(H) $-measurable
   such that $f(\cdot,0,0,0)\in M_{\mathscr{F}}^2(0,T;H)
   $.
   And there exists a constant $C$  such that
   for all $$(t,y,z,r,\bar{y},\bar{z},
   \bar r)
\in [0, T]\times H\times H\times M^{\nu,2}( E; H)\times H\times H \times
M^{\nu,2}( E; H)$$ and a.s.$(t,\omega)\in [0,T]\times \Omega,$
   \begin{eqnarray}
     \begin{split}
       ||f(t,y,z,r)-b(t,y,z,r)||_H \leq  C\bigg\{||y-\bar y||_H+||z-\bar z||_H+
       ||r-\bar r||_{M^{\nu,2}( E; H)}\bigg\}.
     \end{split}
   \end{eqnarray}
\end{assumption}

If the coefficients $(A^*, B^*,f,\xi)$ satisfy Assumptions \ref{ass:3.1} and
\ref{ass:3.2}, they are said to be a generator of BSEE \eqref{eq:2.7}.

\begin{definition}\label{defn:c1}
A  $(V \times H \times M^{\nu,2}( E; H) )$-valued, ${\mathbb F}$-adapted process $( Y (\cdot), Z (\cdot), R (\cdot, \cdot) )$
is called a solution to the BSEE \eqref{eq:2.7}, if $Y (\cdot) \in { M}_{\mathscr F}^2 ( 0, T; V )$,
$Z (\cdot) \in { M}_{\mathscr F}^2 ( 0, T; H )$ and $R (\cdot, \cdot) \in { M}_{\mathscr F}^{\nu, 2}(0,T; H)$ such that for every $\phi \in V$ and
a.e. $( t, \omega ) \in [ 0, T ] \times \Omega$, it holds that
\begin{eqnarray}\label{eq:c5}
( Y (t),  \phi )_H  &=& (\xi, \phi)_H
- \int_t^T \Big\langle  A^* (s) Y (s) +B^*(s)Z(s)+f ( s, Y (s), Z (s), Y ( s  ), R(s, \cdot) ), \phi \Big\rangle d t \nonumber \\
&& - \int_t^T ( Z (s), \phi )_H d W (s) -\int_t^T\int_E (R(s,e), \phi)_H\tilde \mu(ds,de), \quad t \in [0, T] ,
\end{eqnarray}
or alternatively, $( Y (\cdot), Z (\cdot), R (\cdot, \cdot) )$ satisfies the following It\^{o}'s equation in $V^*$:
\begin{eqnarray}
Y(t)&=& \xi -\int_t^T \big[A^* (s) Y (s) d s+ B^* (s) Z (s)+f ( t, {Y} (s), {Z} (s),  R(s,\cdot) ) \big] d s \nonumber \\
&&- \int_t^T Z (s) d W (s)-\int_t^T \int_E R (s,e) d \tilde \mu(ds,de) , \quad t \in [0, T] .
\end{eqnarray}
\end{definition}

\begin{lemma}[{\bf Existence and Uniqueness of BSEE \cite{Me}}]\label{lem:1.3}
For any generator $(A^*,B^*, f,\xi)$, BSEE \eqref{eq:2.7} has a unique solution $(Y(\cdot), Z(\cdot), R(\cdot,\cdot)).$ Moreover, $Y(\cdot)\in S_{\mathbb{ F}}^2(0,T;H)$.
\end{lemma}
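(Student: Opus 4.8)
The plan is to prove the result in two stages: first solve the \emph{linear} equation in which the generator is frozen to a given source term, and then recover the general Lipschitz case by a contraction argument. For the linear stage, fix $f_0(\cdot)\in M_{\mathscr F}^2(0,T;H)$ and consider the equation obtained by replacing $f(t,Y(t),Z(t),R(t,\cdot))$ in \eqref{eq:2.7} with $f_0(t)$; the goal is to produce a solution $(Y,Z,R)$ in $M_{\mathscr F}^2(0,T;V)\times M_{\mathscr F}^2(0,T;H)\times M_{\mathscr F}^{\nu,2}([0,T]\times E;H)$. First I would construct approximate solutions by a Galerkin scheme: choose a complete orthonormal basis of $H$ lying in $V$, project the terminal datum $\xi$ and the coefficients onto the span of the first $n$ basis vectors, and solve the resulting finite-dimensional backward stochastic differential equation with jumps, whose solvability follows from the martingale representation theorem together with the linear structure.

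The key analytic input is the coercivity condition (Assumption \ref{ass:3.1}). Applying It\^o's formula to $\|Y^n(t)\|_H^2$ and integrating over $[t,T]$, the coercivity bound on $-\langle A^*(s)Y^n(s),Y^n(s)\rangle$ produces a coercive term $\alpha\int_t^T\|Y^n(s)\|_V^2\,ds$, while the quadratic-variation contributions of the Brownian and Poisson integrals supply $\int_t^T\|Z^n(s)\|_H^2\,ds+\int_t^T\!\!\int_E\|R^n(s,e)\|_H^2\nu(de)\,ds$ on the favourable side; the cross term involving $B^*$ and the source term $2(f_0,Y^n)_H$ are absorbed by Young's inequality, and a Gronwall argument then yields the uniform a priori estimate
\begin{equation*}
{\mathbb E}\Big[\sup_{0\le t\le T}\|Y^n(t)\|_H^2\Big]+{\mathbb E}\int_0^T\|Y^n(t)\|_V^2\,dt+{\mathbb E}\int_0^T\|Z^n(t)\|_H^2\,dt+{\mathbb E}\int_0^T\!\!\int_E\|R^n(t,e)\|_H^2\nu(de)\,dt\le K\Big({\mathbb E}\|\xi\|_H^2+{\mathbb E}\int_0^T\|f_0(t)\|_H^2\,dt\Big),
\end{equation*}
the supremum bound coming from the Burkholder--Davis--Gundy inequality applied to the martingale and jump integrals. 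Uniform boundedness then yields weak limits $(Y,Z,R)$ along a subsequence, and passing to the limit in the Galerkin system identifies them as a solution of the variational form of \eqref{eq:2.7}; the $S^2_{\mathscr F}(0,T;H)$-regularity of $Y$ follows from the same It\^o identity for the limit.

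For the general case I would set up a contraction on $\mathcal B\triangleq M_{\mathscr F}^2(0,T;V)\times M_{\mathscr F}^2(0,T;H)\times M_{\mathscr F}^{\nu,2}([0,T]\times E;H)$ equipped with the exponentially weighted norm $\|(y,z,r)\|_\beta^2={\mathbb E}\int_0^T e^{\beta t}(\|y\|_V^2+\|z\|_H^2+\|r(\cdot)\|_{M^{\nu,2}(E;H)}^2)\,dt$. Given $(y,z,r)\in\mathcal B$, define $\Gamma(y,z,r)=(Y,Z,R)$ to be the solution of the linear equation with source $f_0(t)=f(t,y(t),z(t),r(t,\cdot))$, which is well posed by the first stage since the Lipschitz condition (Assumption \ref{ass:3.2}) guarantees $f_0\in M_{\mathscr F}^2(0,T;H)$. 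Applying It\^o's formula to $e^{\beta t}\|Y^1(t)-Y^2(t)\|_H^2$ for two inputs, invoking coercivity again and the Lipschitz bound on $f$, and choosing $\beta$ large enough, I would show that $\Gamma$ is a strict contraction on $(\mathcal B,\|\cdot\|_\beta)$; its unique fixed point is the desired solution, and uniqueness in the original norm follows from the equivalence of the two norms on the finite horizon $[0,T]$.

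The main obstacle is the construction in the linear stage: because the equation runs backward with the unbounded operator $A^*$ and carries a Poisson jump integral, one cannot apply a contraction directly and must instead justify the Galerkin passage to the limit, in particular identifying the weak limits of $A^*(\cdot)Y^n$, $Z^n$ and $R^n$ with the correct objects and verifying that the limiting triple genuinely solves the variational form of \eqref{eq:2.7}. Handling the Brownian and jump martingale terms together in the a priori estimate---keeping $\int_E\|R^n\|_H^2\nu(de)$ under control while simultaneously extracting the $V$-coercivity from the $A^*$-term---is the delicate point; everything else reduces to standard Gronwall and Burkholder--Davis--Gundy estimates.
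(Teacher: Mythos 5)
The paper does not actually prove this lemma: it is imported verbatim from the reference \cite{Me} and stated in the appendix as a known preliminary, so there is no in-paper argument to compare yours against. Judged on its own, your sketch is the standard variational-framework proof (Galerkin approximation plus energy estimates for the linear equation, then a Banach fixed point in an exponentially weighted norm for the Lipschitz generator), and it is the natural — almost certainly the intended — route. Two points deserve care if you were to write it out. First, in the finite-dimensional stage you need the martingale representation theorem for the filtration generated jointly by $W$ and the Poisson random measure $\mu$; this holds here because $\{\mathscr F_t\}$ is defined as exactly that augmented filtration, but it should be invoked explicitly, since otherwise the orthogonal-martingale term would spoil the representation of $(Z^n,R^n)$. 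Second, in the energy estimate the cross term $2\langle B^*(s)Z(s),Y(s)\rangle = 2(Z(s),B(s)Y(s))_H$ must be absorbed using the specific form of the coercivity condition, namely that $-\langle A^*x,x\rangle+\lambda\|x\|_H^2$ dominates $\alpha\|x\|_V^2+\|B^*x\|_H^2$ \emph{jointly}; Young's inequality alone with an arbitrary coercive $A^*$ would not close the estimate, because the $\|Z\|_H^2$ term generated on the unfavourable side has coefficient $1$ and leaves no margin unless the $\|B^*Y\|_H^2$ contribution is controlled by the same coercivity inequality. (As stated in the appendix the BSEE coercivity inequality is missing the squares on the norms; you correctly use the squared form, which is clearly what is meant.) With those two points made precise, your outline is complete and consistent with the a priori estimate \eqref{eq:2.15} that the paper records as a consequence.
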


\begin{lemma}[{\bf  Continuous Dependence Theorem of BSEE}]\label{lem:1.4}
Let $(A^*, B^*, f,\xi)$ and $(A^*,
B^*,  \bar{f},\bar{\xi})$ be two generators of BSEE \eqref{eq:2.7}. Suppose that $(Y(\cdot),Z(\cdot), R(\cdot,\cdot))$ and $(\bar{Y}(\cdot),\bar{Z}(\cdot),
\bar R(\cdot,\cdot))$ are the
solutions of BSEE \eqref{eq:2.7} corresponding to $(A^*,B^*, f,\xi)$ and $(A^*,B^*, \bar f,\bar{\xi})$, respectively. Then
\begin{eqnarray}\label{eq:2.14}
&& {\mathbb E} \bigg [ \sup_{t \in [0, T]}\|Y(t) -\bar{Y}(t)\|_H^2 \bigg ] + {\mathbb E} \bigg [ \int_{0}^T\|Y(t)-\bar{Y}(t)\|_V^2dt \bigg ]
+ {\mathbb E} \bigg [ \int_0^T\|Z(t)-\bar{Z}(t)\|^2_Hdt \bigg ]
\nonumber
\\&&~~~+{\mathbb E} \bigg [ \int_0^T\int_E\|R(t,e)-\bar{R}(t,e)\|^2_H
\nu(de)dt \bigg ]  \nonumber \\
&& \leq K \bigg \{ {\mathbb E} [ \|\xi-\bar{\xi}\|_H^2 ] + {\mathbb E} \bigg [ \int_0^T\|f(t,\bar{Y}{(t),\bar{Z}(t),
\bar R(t,\cdot)})-\bar{f}(t, \bar{Y}{(t),\bar{Z}(t)},
\bar R(t,\cdot))\|^2_Hdt \bigg ] \bigg \} ,
\end{eqnarray}
where $K$ is a positive constant depending only on $T$ and the constants $C,\alpha, {\lambda}$ in Assumption \ref{ass:3.1}.

In particular, if $(A^*, B^*,\bar{f},\bar{\xi})=(A^*, B^*, 0,0)$, the following a priori estimate holds
\begin{eqnarray}\label{eq:2.15}
&&{\mathbb E} \bigg [ \sup_{t \in [0, T]}\|Y(t)\|^2_H \bigg ] + {\mathbb E} \bigg [ \int_{0}^T\|Y(t)\|_V^2 d t \bigg ] + {\mathbb E} \bigg [ \int_0^T\|Z(t)\|^2_Hdt
 + {\mathbb E} \bigg [ \int_0^T\int_{E}\|R(t,e)\|^2_H\nu(de)dt\bigg ]
\\&&\leq K \bigg\{ {\mathbb E} [ \|\xi\|^2_H ] + {\mathbb E} \bigg [ \int_0^T\|f(t,0,0,0) \|^2_Hdt \bigg ] \bigg \}.
\end{eqnarray}
\end{lemma}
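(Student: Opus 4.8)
The plan is to reduce the estimate to an energy inequality for the difference of the two solutions and to close it by a Gronwall argument followed by the Burkholder--Davis--Gundy (BDG) inequality. Set $\hat Y:=Y-\bar Y$, $\hat Z:=Z-\bar Z$, $\hat R:=R-\bar R$ and $\hat\xi:=\xi-\bar\xi$. Subtracting the two copies of \eqref{eq:2.7}, the triple $(\hat Y,\hat Z,\hat R)$ solves a linear BSEE with terminal value $\hat\xi$ whose driver decomposes as $\big[f(t,Y,Z,R)-f(t,\bar Y,\bar Z,\bar R)\big]+\delta f(t)$, where $\delta f(t):=f(t,\bar Y,\bar Z,\bar R)-\bar f(t,\bar Y,\bar Z,\bar R)$ is precisely the source term on the right-hand side of \eqref{eq:2.14}. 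The first bracket is Lipschitz in $(\hat Y,\hat Z,\hat R)$ by Assumption \ref{ass:3.2}.

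First I would apply the It\^o formula to $\|\hat Y(t)\|_H^2$ in the Gelfand triple $(V,H,V^*)$, treating the jump part with care: the integral against $\tilde\mu$ is a martingale, while its compensator contributes the term $\int_0^T\!\int_E\|\hat R(t,e)\|_H^2\,\nu(de)\,dt$ to the energy balance. After taking expectations the two stochastic integrals vanish, and one is left with an identity in which the delicate contributions are the duality pairing $\langle A^*\hat Y,\hat Y\rangle$, the drift cross term generated by $B^*\hat Z$, and the two driver terms tested against $\hat Y$.

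The heart of the proof is to absorb these using the coercivity condition of Assumption \ref{ass:3.1}. Coercivity furnishes the dissipative bound $-\langle A^*\hat Y,\hat Y\rangle\ge\alpha\|\hat Y\|_V^2+\|B^*\hat Y\|_H^2-\lambda\|\hat Y\|_H^2$, which simultaneously transfers $\alpha\|\hat Y\|_V^2$ to the left-hand side (creating the $V$-norm of \eqref{eq:2.14}) and supplies the $\|B^*\hat Y\|_H^2$ reserve that, via Young's inequality, absorbs the $B^*\hat Z$ cross term against a small portion of $\|\hat Z\|_H^2$. The Lipschitz part is dominated by $\|\hat Y\|_H\big(\|\hat Y\|_H+\|\hat Z\|_H+\|\hat R\|_{M^{\nu,2}(E;H)}\big)$, and Young's inequality once more peels off arbitrarily small multiples of $\|\hat Z\|_H^2$ and $\|\hat R\|^2$ (returned to the left) at the price of a constant multiple of $\|\hat Y\|_H^2$; the source term is controlled by $2(\delta f,\hat Y)_H\le\|\hat Y\|_H^2+\|\delta f\|_H^2$. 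What remains after all absorptions is
\[ {\mathbb E}\|\hat Y(t)\|_H^2+c\,{\mathbb E}\int_t^T\big(\|\hat Y\|_V^2+\|\hat Z\|_H^2+\|\hat R\|^2\big)ds\le{\mathbb E}\|\hat\xi\|_H^2+{\mathbb E}\int_t^T\|\delta f\|_H^2\,ds+C\,{\mathbb E}\int_t^T\|\hat Y(s)\|_H^2\,ds, \]
and Gronwall's inequality applied backward on $[t,T]$ eliminates the last term, yielding $\sup_t{\mathbb E}\|\hat Y(t)\|_H^2$ and, upon reinsertion, the $V$-, $Z$- and $R$-integral bounds.

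Finally, to upgrade $\sup_t{\mathbb E}\|\hat Y(t)\|_H^2$ to ${\mathbb E}\sup_t\|\hat Y(t)\|_H^2$ as demanded by \eqref{eq:2.14}, I would return to the pathwise energy identity, take the supremum in $t$ before the expectation, and bound the two martingale terms by the BDG inequality; their quadratic variations yield terms of the form $\big(\int_0^T\|\hat Y\|_H^2\|\hat Z\|_H^2\,ds\big)^{1/2}$ together with the analogous jump expression, which after Young's inequality leave a small multiple of ${\mathbb E}\sup_t\|\hat Y\|_H^2$ (absorbed on the left) plus the integrals already estimated. Specializing to $(\bar f,\bar\xi)=(0,0)$ and $\bar Y=\bar Z=\bar R=0$ gives the a priori estimate \eqref{eq:2.15}. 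I expect the main obstacle to be the bookkeeping in this coercivity/Young step: the several small parameters must be tuned so that every $\|\hat Y\|_V^2$, $\|\hat Z\|_H^2$ and $\|\hat R\|^2$ appearing on the right is strictly dominated by its left-hand counterpart, with the jump term the most delicate since it enters both the compensator and, through BDG, the supremum estimate.
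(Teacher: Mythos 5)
The paper does not actually prove this lemma: it is stated in the appendix as a quoted preliminary result, with the companion existence--uniqueness statement attributed to the reference [Me] (Meng and Tang, preprint 2017), so there is no in-paper argument to compare yours against. Your proposal is the standard variational-framework proof of such continuous-dependence estimates and is essentially correct: It\^o's formula for $\|\hat Y\|_H^2$ in the Gelfand triple (with the jump compensator producing the $\int_0^T\!\int_E\|\hat R\|_H^2\nu(de)dt$ term), coercivity to generate the $V$-norm and the $\|B^*\hat Y\|_H^2$ reserve, Young's inequality to absorb the cross and Lipschitz terms, backward Gronwall, and BDG to pass from $\sup_t\mathbb{E}$ to $\mathbb{E}\sup_t$. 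Two points deserve more care than your sketch gives them. First, the absorption of the cross term $2\langle B^*\hat Z,\hat Y\rangle=2(\hat Z,B\hat Y)_H$ is tighter than you suggest: Young gives $\epsilon\|\hat Z\|_H^2+\epsilon^{-1}\|B\hat Y\|_H^2$, and since the coercivity reserve (after the factor $2$ from It\^o) is exactly $2\|B\hat Y\|_H^2$, one is forced to take $\epsilon\geq 1/2$, so the portion of $\|\hat Z\|_H^2$ consumed is not arbitrarily small --- it is only the strict inequality $\epsilon<1$ that lets the estimate close, and this is precisely why the coercivity condition is stated with the $\|Bx\|_H^2$ term built in. Second, the It\^o formula for $\|\cdot\|_H^2$ applied to a process whose drift lives in $V^*$ while the process itself is only in $M^2_{\mathscr F}(0,T;V)\cap S^2_{\mathscr F}(0,T;H)$ is itself a nontrivial result (Krylov--Rozovskii/Gy\"ongy type, extended to the Poisson-jump setting); in a complete write-up you should cite or prove that version rather than invoke It\^o's formula as if the process were $H$-valued semimartingale in the classical sense. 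With those two caveats addressed, your argument supplies a correct proof of both \eqref{eq:2.14} and, by specializing $(\bar f,\bar\xi)=(0,0)$, of \eqref{eq:2.15}.
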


\end{document}